\documentclass[12pt]{article}
\usepackage{enumerate}
\usepackage{amssymb,a4wide,latexsym,makeidx,epsfig,fleqn}
\usepackage{amsthm}
\usepackage{amsmath}
\usepackage{enumerate}
\usepackage{graphicx}
\usepackage{float}
\usepackage{color}
\usepackage{indentfirst}
\usepackage{pdflscape}
\usepackage{siunitx}
\usepackage{diagbox}
\usepackage{rotating}
\usepackage{booktabs}
\usepackage{titling}
\usepackage[colorlinks, linkcolor=blue, anchorcolor=blue, citecolor=blue]{hyperref}
\usepackage[numbers,sort&compress]{natbib}

\allowdisplaybreaks[4]
\newtheorem{theorem}{Theorem}[section]

\newtheorem{lemma}[theorem]{Lemma}

\newtheorem{corollary}[theorem]{Corollary}

\newtheorem{claim}{Claim}
\newtheorem*{claim*}{Claim}

\begin{document}
	
	\textwidth 150mm \textheight 225mm
	\topmargin -1.4 cm  \parskip 4pt
	
	\title{Size and spectral conditions  for a graph with given minimum degree to be  $k$-$d$-critical\thanks{Supported by the
National Natural Science Foundation of China (Nos. 12271439 and 12371358)}}
	
	\author{{Zhenhao Zhang$^{a,b}$, Xiaogang Liu$^{a,b}$}, Ligong Wang$^{a,b,}$\thanks{Corresponding author.}\\
		{\small $^a$School of Mathematics and Statistics, Northwestern
			Polytechnical University,}\\ {\small  Xi'an, Shaanxi 710129,
			P.R. China.}\\
		{\small $^b$Xi'an-Budapest Joint Research Center for Combinatorics, Northwestern
			Polytechnical University,}\\
		{\small Xi'an, Shaanxi 710129,
			P.R. China. }\\
		{\small E-mail: zhangzhenhao@mail.nwpu.edu.cn, xiaogliu@nwpu.edu.cn, lgwang@nwpu.edu.cn} }
	\date{}
	\maketitle	
	\begin{center}
		\begin{minipage}{120mm}
			\vskip 0.3cm
			\begin{center}
				{\small {\bf Abstract}}
			\end{center}
			{\small   		
				A $k$-matching in a graph $G$ is defined as a function $f:E(G) \rightarrow \{0,1,\ldots,k\}$ satisfying $\sum_{e\in E_G(v)} f(e)$ $\leq k$ for each vertex $v\in V(G)$, where $E_G(v)$ denotes the set of edges incident to  $v$ in $G$. For $1\leq d\leq k$ and $d \equiv |V(G)|~(\mathrm{mod}~2)$, if for any $ v \in V(G)$,  there exists a $k$-matching $f$ such that $\sum_{e\in E_G(v)}f(e)=k-d$  and $\sum_{e\in E_G(u)}f(e)=k \text{ for any } u\in V(G)-\{v\}$, then $G$ is $k$-$d$-critical. A graph $G$ of odd order (resp. even order) is generalized factor-critical (resp. generalized bicritical)  if the empty set is the unique set attaining the maximum value in $k$-Berge-Tutte-formula of $G$.
				In this paper, we provide sharp sufficient conditions in terms of size or spectral radius respectively for a graph $G$ to be $k$-$d$-critical, generalized factor-critical and generalized bicritical with minimum
				degree.

				\vskip 0.1in \noindent {\bf Key Words}: $k$-matching, Generalized factor-critical, Generalized bicritical, $k$-$d$-critical, Spectral radius. \vskip
				0.1in \noindent {\bf AMS Subject Classification (2020)}: \ 05C50, 05C35. }
		\end{minipage}
	\end{center}
	
	%%%%%%%%%%%%%%%%%%%%%%%%%%%%%%%%%%%%%%%%%%%%%%%%%%%%%%%%%%%%%%%%%%%%%%%%%%%%%%
	%%% Part 1: Introduction %%%
	\section{Introduction }
	\label{sec:ch6-introduction}
	% Para1: basic concept
	\noindent
	
	All graphs in this paper are simple, connected and undirected. Let $G$ be a graph with vertex set $V(G)$ and edge set $E(G)$. The order of $G$ is defined as the number of vertices of $G$, while its size, denoted by $e(G)$, refers to the number of edges of $G$.  For a vertex $v \in V(G)$, we write $E_G(v)$ for the set of edges incident with $v$, and $N_G(v)$ (or simply $N(v)$) for the neighborhood of $v$ in $G$. Let $\delta(G)$ denote  the minimum degree of $G$.
	A vertex is called an isolated vertex in a graph $G$ if it is not adjacent to any other vertex in $G$. Denote by $I(G)$ the set of isolated vertices in $G$.
	Given two graphs $G$ and $H$, if $V(H)\subseteq V(G)$ and $E(H)\subseteq E(G)$, then $H$ is said to be a subgraph of $G$, denoted by $H\subseteq G$. If additionally $V(H)=V(G)$, then $H$ is referred to as a spanning subgraph of $G$.
	For a subset $V_1 \subseteq V(G)$, the graph obtained by removing all vertices in $V_1$ along with their incident edges is denoted by $G-V_1$. When $V_1=\{v\}$, we simply write $G-v$.
	The disjoint union of two vertex-disjoint graphs $G_1$ and $G_2$, denoted by $G_1+G_2$, has vertex set $V(G_1)\cup V(G_2)$ and edge set $E(G_1)\cup E(G_2)$. The join of $G_1$ and $G_2$, written as $G_1\vee G_2$, is formed by taking their disjoint union and adding all edges between the two vertex sets.
	As usual, denote by $K_n$, $K_{a,n-a}$ and $C_n$ the complete graph, the complete bipartite graph and the cycle of order $n$, respectively. In particular, $K_0$  denotes the null graph.
	The complement of $G$, denoted by $\overline{G}$, shares the same vertex set as $G$, and two distinct vertices in $\overline{G}$ are adjacent if and only if they are not adjacent in $G$.
	For a graph $G$, let $\mbox{odd}(G)$ and $i(G)$ denote the number of nontrivial odd components and the number of isolated  vertices, respectively.	The adjacency matrix of a graph $G$ of order $n$ is denoted by $A(G)=(a_{ij})_{n\times n}$, where $a_{ij} = 1$ if vertices $v_i$ and $v_j$ are adjacent, and $a_{ij} = 0$ otherwise.
	The adjacency spectral radius of $G$, denoted by $\rho(G)$, is defined as the largest eigenvalue of $A(G)$.
	
	A $k$-matching of a graph $G$ is defined as a function $f\colon E(G) \rightarrow \{0,1,\ldots,k\}$ satisfying $\sum_{e\in E_G(v)} f(e)$ $\leq k$ for every vertex $v\in V(G)$. The $k$-matching number of $G$, denoted by $\mu_k(G)$, is given by
	$$
	\mu_k(G)=\mbox{max}\left\{\sum_{e\in E(G)} f(e)~|~ \text{$f$ is a $k$-matching}\right\}.
	$$

	Liu et al. \cite{Liu1,Liu2} gave the $k$-Berge-Tutte-formula of a graph $G$:
	\[\mbox{def}_k(G)=\mathop{\mbox{max}}\limits_{S\subseteq V(G)} \begin{cases}
		k\cdot i(G-S)-k|S|, & \text{$k$ is even};     \\
		\mbox{odd}(G-S)+k\cdot i(G-S)-k|S|, &\text{$k$ is odd}.
	\end{cases}\]
	A  $k$-barrier of a graph $G$ is the subset $S\subseteq V(G)$ that attains the maximum value in $k$-Berge-Tutte-formula.
	Chang et al.  \cite{Chang} gave the definitions of a generalized factor-critical graph  (a $\text{GFC}_k$ graph for short) and a generalized bicritical graph (a $\text{GBC}_k$ graph for short). That is a graph $G$ of odd order (resp. even order) is generalized factor-critical (resp. generalized bicritical)  if $\varnothing$ is its only $k$-barrier.
	
	For an odd integer $k\geq 3$, Liu et al. \cite{Liu2} established the following characterization of a $\text{GFC}_k$ graph.
	\begin{lemma}\emph{(\cite{Liu2})}
		If $G$ is a graph of odd order $n\geq 3$ and $k\geq 3$ is odd, then $G$ is $\text{GFC}_k$ if and only if for any $ v \in V(G)$,  there exists a $k$-matching $f$ such that
		\[ \sum_{e\in E_G(v)}f(e)=k-1 \text{~and~} \sum_{e\in E_G(u)}f(e)=k \text{~for~any~} u\in V(G)-\{v\}.
		\]
		
	\end{lemma}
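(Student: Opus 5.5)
The plan is to use the $k$-Berge-Tutte formula together with the fact that a $k$-matching is realized through a perfect $k$-matching of an auxiliary graph. The key tool is the formula $\mu_k(G)=\frac{1}{2}\bigl(k|V(G)|-\mbox{def}_k(G)\bigr)$. Here $\mbox{def}_k(G)$ is the minimum total deficiency $\sum_v \bigl(k-\sum_{e\in E_G(v)}f(e)\bigr)$ over all $k$-matchings $f$. I take this identity as the content of the results of Liu et al.\ cited with the formula.

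\emph{Reformulation.} Since $n$ and $k$ are odd, every $k$-matching has total degree sum $2\sum_e f(e)$, which is even, while $kn$ is odd. So $\mbox{def}_k(G)\ge 1$. Taking $S=\varnothing$ gives value $\mbox{odd}(G)+k\,i(G)=1$, because $G$ is connected with odd order $n\ge 3$. The condition ``$\varnothing$ is the unique $k$-barrier'' is therefore equivalent to: $\mbox{def}_k(G)=1$, and every nonempty $S$ satisfies $\mbox{odd}(G-S)+k\,i(G-S)-k|S|\le 0$.

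\emph{Sufficiency.} Suppose that for every $v$ there is a $k$-matching $f_v$ with deficiency $1$ at $v$ and $0$ elsewhere, and suppose some nonempty $S$ has $\mbox{odd}(G-S)+k\,i(G-S)-k|S|\ge 1$. Choose $v\in S$ and count the weight $f_v$ sends from $S$ into $G-S$. Each isolated vertex of $G-S$ receives weight exactly $k$ from $S$. Each nontrivial odd component $C$ receives weight of the same parity as $k|C|$, which is odd, so it receives at least $1$. Vertices of $S$ can emit at most $k|S|-1$ in total, because $v$ has deficiency $1$. Hence $k\,i(G-S)+\mbox{odd}(G-S)\le k|S|-1$, a contradiction. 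So $\varnothing$ is the only barrier.

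\emph{Necessity.} This is the main obstacle. Assume $\varnothing$ is the unique barrier and fix $v$. Let $G'$ be $G$ with a new pendant vertex $w$ joined to $v$, and set capacity $1$ at $w$ and $v$'s capacity to $k$. Then ask for an $f$ on $G'$ that saturates everything with $f(vw)=1$. This reduces to a $b$-factor (or $f$-factor) problem, and I would apply Tutte's $f$-factor theorem, or equivalently the generalized Tutte–Berge formula for $b$-matchings. The alternative I would try first is a simpler route. Take a maximum $k$-matching $f$ of $G$; it has deficiency $1$, at some vertex $u$. Then move the deficiency from $u$ to an arbitrary $v$ by alternating-path exchanges: increase and decrease $f$ by $1$ along an alternating walk from $u$ to $v$ in which the decreased edges have positive $f$-value. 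If no such walk reaches $v$, I take $S$ to be the set of vertices reachable from $u$ by even alternating walks ending appropriately, mimicking the Gallai–Edmonds argument. The final step is to show that this $S$ is a nonempty set attaining value $\ge 1$ in the formula, which contradicts uniqueness of the barrier. Verifying that unreachable structure yields exactly isolated vertices (which need weight $k$) and odd components (which contribute $1$) is the delicate bookkeeping. Here the oddness of $k$ is essential, because it is what makes each odd component force an odd, and hence positive, imbalance.
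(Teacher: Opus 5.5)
The paper does not prove this lemma; it quotes it from Liu et al., so your argument has to stand on its own. Your reformulation is right. Your direction ``the $k$-matching condition implies $\text{GFC}_k$'' is a complete proof: with $v\in S$, each isolated vertex of $G-S$ draws exactly $k$ from $S$, each nontrivial odd component draws an odd and hence positive amount, and $S$ emits at most $k|S|-1$.

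\textbf{The gap is the converse}, that $\text{GFC}_k$ implies the $k$-matching condition. This is the substantive half of the lemma, and you only describe two possible strategies for it.
\begin{itemize}
\item For the $b$-factor route, you never verify the Tutte-type condition for your auxiliary instance.
\item For the alternating-walk route, you neither define $S$ nor show that the unreachable structure consists only of isolated vertices and odd components. That ``bookkeeping'' is a Gallai--Edmonds structure theorem for $k$-matchings, not a routine check.
\end{itemize}
Also, the uniform $k$-Berge--Tutte formula you name as your key tool controls only the total deficiency, not where it sits. It cannot finish this direction by itself.

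\textbf{How to close it along your first route.} Your pendant-vertex gadget amounts to setting $b(v)=k-1$ and $b(u)=k$ for $u\neq v$; note $b(V(G))=kn-1$ is even. Then use Tutte's criterion for perfect $b$-matchings, the non-uniform analogue of the $k$-Berge--Tutte formula: a perfect $b$-matching exists if and only if, for every $U\subseteq V(G)$,
\[
b\bigl(I(G-U)\bigr)+\bigl|\{K:\ K \mbox{ a nontrivial component of } G-U \mbox{ with } b(K) \mbox{ odd}\}\bigr|\le b(U).
\]
Write $\phi(U)=\mbox{odd}(G-U)+k\,i(G-U)-k|U|$.

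You also need a parity fact you did not state. Since $k$ is odd, and the number of odd-order components of $G-U$ (trivial ones included) has the parity of $n-|U|$, we get $\phi(U)\equiv n\equiv 1 \pmod 2$. Hence $\text{GFC}_k$ gives $\phi(U)\le -1$ for every nonempty $U$, not merely $\phi(U)\le 0$.

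The criterion then reduces to three cases:
\begin{itemize}
\item If $v\in U$, it reads $\phi(U)\le -1$.
\item If $v$ is isolated in $G-U$ (which forces $U\neq\varnothing$), it reads $\phi(U)\le 1$.
\item If $v$ lies in a nontrivial component $K_0$ of $G-U$, then $b(K_0)=k|K_0|-1$ has parity opposite to $|K_0|$. The condition reads $\phi(U)\le -1$ or $\phi(U)\le 1$ according as $|K_0|$ is even or odd.
\end{itemize}
All of these hold for $U\neq\varnothing$. For $U=\varnothing$, connectivity gives $K_0=G$ of odd order, and the inequality becomes $0\le 0$.

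The resulting perfect $b$-matching has every edge value at most $k$, so it is the required $k$-matching. Without this verification, or an equivalent completed alternating-walk argument, the necessity direction remains unproved.
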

	
	Chang et al.   \cite{Chang} extended the above result and defined a $k$-$d$-critical graph.  For an odd integer $k\geq 3$, let $1\leq d \leq k$ and $|V(G)|-d\equiv 0 ~(\mathrm{mod}~2)$. If for any $ v \in V(G)$,  there exists a $k$-matching $f$ such that
	\[ \sum_{e\in E_G(v)}f(e)=k-d \text{~and~} \sum_{e\in E_G(u)}f(e)=k \text{~for~any~} u\in V(G)-\{v\},
	\]
	then $G$ is $k$-$d$-critical. Chang et al. \cite{Chang} also pointed out that for odd
	$k$,  a $k$-$d$-critical graph is $\text{GFC}_k$ when $d$ is odd, and $\text{GBC}_k$ when $d$ is even.

	In 1953, Tutte  \cite{Tut} introduced the concept of a 2-matching and proved that the existence of a $\{K_2, \{C_{2t+1}\colon t\geq 1 \}\}$-factor in a graph is equivalent to the existence of a perfect 2-matching in a graph. Lu and Wang   \cite{Lu}  extended the notion of 2-matching to $k$-matching and characterized a necessary and sufficient condition for the existence of $k$-matching in a graph.  Liu et al.  \cite{Liu1,Liu2} proposed $k$-Berge-Tutte-formula formula in a graph.
	Building on this foundation, Chang et al. \cite{Chang} extended the above results by introducing the notions of $\text{GFC}_k$ graphs, $\text{GBC}_k$ graphs and $k$-$d$-critical graphs, and established the corresponding necessary and sufficient conditions for their existence in a graph. A $k$-matching of a graph is the usual notion of a matching when $k=1$.
	O \cite{O} established a spectral condition for the existence of a perfect $k$-matching in a graph. Zhao et al. \cite{Zhao} later extended the result to the $A_\alpha$-spectral radius of a graph. Zhang and Lin \cite{Zhang1}  provided a distance spectral radius condition ensuring the existence of a perfect matching in a graph or a bipartite graph. Zhang et al. \cite{Zhang2} generalized
	and refined the previous result in \cite{Zhang1}. They investigated the existence of a
	perfect matching in a bipartite graph with prescribed minimum degree
	in terms of its distance spectral radius.
	Zhang and Fan   \cite{Zhang} proposed both a size condition and a spectral radius condition for the existence of a perfect $k$-matching in a graph. Niu et al.   \cite{Niu} further extended the spectral radius condition to the $A_\alpha$-spectral radius condition.
	The spectral radius of a graph is closely related to its fundamental properties.
	Therefore, it is a natural and well-motivated problem to characterize $k$-$d$-critical graphs, $\text{GFC}_k$ graphs and $\text{GBC}_k$ graphs in terms of either their size or their spectral radius.

	This paper is organized as follows. In Section 2, we introduce several lemmas that will be used in subsequent proofs. For odd $k\geq 3$, in Sections 3, we establish tight sufficient conditions in terms of size or spectral radius respectively for a graph $G$ to be  $k$-$d$-critical.
	In Section 4, for even $k$, we establish  tight sufficient conditions in terms of size or spectral radius for a graph $G$ to be $\text{GFC}_k$ for odd order and $\text{GBC}_k$ for even order.
	
	\section{Preliminaries}\label{sec:Preliminaries}
	
	In this section, we  give some useful lemmas which will be used later.
	
	\begin{lemma}\label{lem1}\emph{(\cite{Chang})}
		Let $G$ be a graph of order $n\geq 3$ and $k\geq 2$.

		$\mathrm{(1)}$~When $k$ is even, $G$ is $\text{GFC}_k$ (resp.\ $\text{GBC}_k$) if and only if $n$ is odd (resp.\ even) and
		\[
		i(G - S) \leq |S| - 1  \text{ for any } \varnothing \neq S \subseteq V(G).
		\]
		
		$\mathrm{(2)}$~When $k$ is odd, $G$ is $\text{GFC}_k$ if and only if $n$ is odd and
		\[
		\mbox{odd}(G - S) + k \cdot i(G - S) \leq k|S| - 1  \text{ for any } \varnothing \neq S \subseteq V(G).
		\]
		
		$\mathrm{(3)}$~When $k$ is odd, $G$ is $\text{GBC}_k$ if and only if $n$ is even and
		\[
		\mbox{odd}(G - S) + k \cdot i(G - S) \leq k|S| - 2  \text{ for any } \varnothing \neq S \subseteq V(G).
		\]

	\end{lemma}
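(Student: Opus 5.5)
The plan is to derive all three parts directly from the definition: $G$ is $\text{GFC}_k$ (resp.\ $\text{GBC}_k$) exactly when $n$ is odd (resp.\ even) and $\varnothing$ is the unique $k$-barrier. For $S\subseteq V(G)$ write
\[\phi(S)=k\cdot i(G-S)-k|S| \ \ (k \text{ even}),\qquad \phi(S)=\mbox{odd}(G-S)+k\cdot i(G-S)-k|S| \ \ (k \text{ odd}).\]
Saying that $\varnothing$ is the unique $k$-barrier is the same as saying $\phi(S)<\phi(\varnothing)$ for every nonempty $S$. So the proof reduces to two things: computing $\phi(\varnothing)$, and turning the strict inequality into the stated non-strict one.

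\emph{Computing $\phi(\varnothing)$.} Since $G$ is connected with $n\ge 3$, it has no isolated vertex, so $i(G)=0$. Also $G$ itself is its only component, and it is nontrivial. Hence $\phi(\varnothing)=0$ when $k$ is even. When $k$ is odd, $\phi(\varnothing)=\mbox{odd}(G)$, which equals $1$ if $n$ is odd and $0$ if $n$ is even.

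\emph{Even $k$.} Here $\phi(S)=k\,(i(G-S)-|S|)$. The condition $\phi(S)<0$ for all nonempty $S$ is equivalent to $i(G-S)-|S|<0$, and since both terms are integers this is $i(G-S)\le |S|-1$. This gives (1); the parity of $n$ only decides whether the graph is called $\text{GFC}_k$ or $\text{GBC}_k$.

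\emph{Odd $k$: the key step is a parity observation.} I claim $\phi(S)\equiv n \pmod 2$ for every $S$.
\begin{itemize}
\item Since $k$ is odd, $\phi(S)\equiv \mbox{odd}(G-S)+i(G-S)-|S| \pmod 2$.
\item The sum $\mbox{odd}(G-S)+i(G-S)$ is the total number of odd components of $G-S$, trivial ones included.
\item The order of $G-S$ is $n-|S|$, and only odd components contribute to its parity, so this count is $\equiv n-|S|$.
\item Therefore $\phi(S)\equiv n-2|S|\equiv n \pmod 2$.
\end{itemize}

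\emph{Finishing the odd case.} Every value $\phi(S)$ has the same parity as $\phi(\varnothing)$, so $\phi(S)<\phi(\varnothing)$ is equivalent to $\phi(S)\le \phi(\varnothing)-2$.
\begin{itemize}
\item For odd $n$ this reads $\mbox{odd}(G-S)+k\cdot i(G-S)\le k|S|-1$, which is (2).
\item For even $n$ it reads $\mbox{odd}(G-S)+k\cdot i(G-S)\le k|S|-2$, which is (3).
\end{itemize}
Conversely, if these inequalities hold for all nonempty $S$, then $\varnothing$ attains the maximum in the $k$-Berge-Tutte formula and is the unique set doing so. Hence both directions follow.

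The main point needing care is the parity step. One must check that trivial components are counted in $i(G-S)$ but not in $\mbox{odd}(G-S)$, so that together they give exactly the number of odd components. One must also handle the degenerate case $S=V(G)$, where $G-S$ is the null graph: then both counts are $0$ and the parity $n-2n\equiv n$ still holds.
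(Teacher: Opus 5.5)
Your proof is correct. The paper gives no proof of this lemma; it is quoted from Chang et al., so there is no in-paper argument to compare against. Your derivation shows that, under this paper's definitions, the lemma follows directly by unpacking the definition of $\text{GFC}_k$/$\text{GBC}_k$ through the $k$-Berge--Tutte formula:
\begin{itemize}
\item $\varnothing$ is the unique $k$-barrier iff $\phi(S)<\phi(\varnothing)$ for all nonempty $S$.
\item Your computation of $\phi(\varnothing)$ (namely $0$, or $\mbox{odd}(G)\in\{0,1\}$ for odd $k$) is right.
\item For odd $k$, the congruence $\phi(S)\equiv n \pmod 2$ is what turns the strict inequality into the thresholds $k|S|-1$ and $k|S|-2$. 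It rests on the fact that the odd components of $G-S$, trivial and nontrivial together, number $\mbox{odd}(G-S)+i(G-S)\equiv n-|S|$.
\end{itemize}
The parity step is only needed in the ``only if'' direction; the converse follows at once from $\phi(S)\le\phi(\varnothing)-2$.

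Your use of connectivity is essential, not merely convenient, and it is supplied by the paper's standing assumption that all graphs are connected. Without it, $\phi(\varnothing)$ can be positive and the equivalence fails. Take $G=K_1+K_4$ (order $5$) and $k=2$. Then $\phi(\varnothing)=2\cdot i(G)=2$, while $\phi(S)\le 0$ for every nonempty $S$. So $\varnothing$ is the unique $2$-barrier and $G$ is $\text{GFC}_2$ by definition. Yet $S=\{a\}$ with $a\in V(K_4)$ gives $i(G-S)=1>|S|-1$, violating condition (1). It is worth stating explicitly that the lemma, and your proof of it, hold only under that assumption.
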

	
	\begin{lemma}\label{lem2}	\emph{(\cite{Chang})}
		Let $G$ be a  graph of order $n\geq 3$. $k\geq 3$ odd and $1\leq d \leq k$ with  $n\equiv d~(\mathrm{mod}~2)$. $G$ is $k$-$d$-critical if and only if
		\[\mbox{odd}(G-S)+k\cdot i(G-S) \leq
		k|S|-d \text{ for any } \varnothing \neq S \subseteq V(G).
		\]
	\end{lemma}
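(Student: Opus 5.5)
The plan is to derive Lemma~\ref{lem2} from the Tutte-type criterion for perfect $b$-matchings. Here edge values may be any nonnegative integers, which is harmless because every value is at most $\min b\le k$. For a vertex $v$, set $b_v(v)=k-d$ and $b_v(u)=k$ for $u\neq v$. By definition, $G$ is $k$-$d$-critical if and only if for every $v$ there is a function $f$ on $E(G)$ with $\sum_{e\in E_G(u)}f(e)=b_v(u)$ for all $u$. The classical criterion for this (Tutte's $f$-factor theorem specialized to unbounded edge multiplicities) states that such an $f$ exists if and only if for every $S\subseteq V(G)$
\[
b_v(S)\ \ge\ \sum_{u\in I(G-S)} b_v(u)\ +\ \#\{C:\ C \text{ a nontrivial component of } G-S \text{ with } b_v(V(C)) \text{ odd}\}.
\]
The case $S=\varnothing$ includes the parity requirement that $b_v(V(G))$ is even. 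I would quote this criterion rather than reprove it; it is also the engine behind the $k$-Berge-Tutte-formula of Liu et al. Once it is in hand, the lemma reduces to comparing the two conditions.

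\emph{Necessity.} Let $\varnothing\neq S\subseteq V(G)$ and pick $v\in S$. Then $b_v(S)=k|S|-d$. Every component $C$ of $G-S$ avoids $v$, so $b_v(V(C))=k|V(C)|$. Since $k$ is odd, this is odd exactly when $|V(C)|$ is odd. The right-hand side therefore equals $k\cdot i(G-S)+\mbox{odd}(G-S)$, and the criterion for $b_v$ yields $\mbox{odd}(G-S)+k\cdot i(G-S)\le k|S|-d$.

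\emph{Sufficiency.} Fix $v$; we must verify the criterion for $b_v$ and every $S$.
\begin{enumerate}[(i)]
\item $S=\varnothing$. The graph is connected of order $n\ge 3$, so there are no isolated vertices. Moreover $b_v(V(G))=kn-d$ is even, because $n\equiv d \pmod 2$ and $k$ is odd. Both sides are $0$.
\item $v\in S$. The computation is identical to the necessity part, and the hypothesis gives the inequality.
\item $v\notin S$, $S\ne\varnothing$. Now $b_v(S)=k|S|$. Compared with the all-$k$ count $\mbox{odd}(G-S)+k\cdot i(G-S)$, only the component $C_0$ containing $v$ changes. If $C_0=\{v\}$, its contribution drops from $k$ to $k-d$. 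If $C_0$ is nontrivial, its parity may flip, which increases the count by at most $1$. Hence the right-hand side is at most $\mbox{odd}(G-S)+k\cdot i(G-S)+1\le k|S|-d+1\le k|S|$, using $d\ge 1$.
\end{enumerate}
Thus a perfect $b_v$-matching exists for every $v$, and $G$ is $k$-$d$-critical.

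The only delicate step is the precise form of the $b$-matching criterion, in particular that components are counted by the parity of $b(C)$. One must also check that edge values never exceed $k$, which holds since $f(e)\le b_v(u)\le k$ at either endpoint. If quoting that criterion is not allowed, I would instead reduce to Lemma~\ref{lem1}-type barrier arguments by attaching $d$ pendant-type gadgets at $v$. I expect that reduction would be the main obstacle.
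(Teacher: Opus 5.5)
The paper gives no proof of this lemma; it imports it from Chang et al. So there is no in-paper argument to match. Your proposal is correct and self-contained apart from one classical theorem.

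\textbf{The main argument.} You rephrase $k$-$d$-criticality as the existence, for each $v$, of a perfect $b_v$-matching, and then apply Tutte's perfect $b$-matching criterion. This works as you describe.
\begin{enumerate}[(a)]
\item The necessity half needs only the trivial direction of the criterion: count the $f$-weight on edges leaving $S$, plus the parity of nontrivial components.
\item Your sufficiency case split ($S=\varnothing$, $v\in S$, $v\notin S$) is exhaustive.
\item In case (iii), the slack coming from $d\ge 1$ absorbs the possible parity flip of the component containing $v$.
\end{enumerate}

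\textbf{The step you flagged as delicate is sound.} Blow each vertex $u$ up into $b(u)$ pairwise nonadjacent twins, and each edge into a complete bipartite graph. Perfect $b$-matchings of $G$ then correspond to perfect matchings of the blown-up graph $G_b$. In Tutte's condition you may assume the set $X$ is a union of full twin classes: moving a single twin out of $X$ changes $o(G_b-X)-|X|$ by $0$ or $+2$, never downward. After that reduction, each isolated vertex $w$ of $G-S$ contributes $b(w)$ singleton odd components. Each nontrivial component $C$ contributes according to the parity of $b(V(C))$, exactly as in your displayed inequality.

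When $d=k$ you have $b_v(v)=0$. This is harmless: zero-weight vertices can be put into $S$ at no cost, and the inequality is necessary for every $S$ anyway.

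\textbf{Two remarks.}

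First, your use of connectedness in case (i) is essential, not cosmetic. Take $G=K_3+K_4$, $k=3$, $d=1$.
\begin{itemize}
\item The inequality $\mbox{odd}(G-S)+3\,i(G-S)\le 3|S|-1$ holds for every nonempty $S$; the tightest case is a single vertex of the $K_4$, giving $2\le 2$.
\item Yet for $v$ in the $K_4$, the $K_3$ component would need total weight $9$. This is odd, so no suitable $f$ exists.
\end{itemize}
So the lemma depends on the paper's standing convention that all graphs are connected, and you were right to invoke it.

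Second, the gadget fallback you mention is unnecessary. If you prefer not to quote the $b$-matching theorem, the blow-up argument above derives it from Tutte's $1$-factor theorem in a few lines.

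\textbf{What your route buys.} Compared with simply citing the result, you get a short, direct proof whose only external input is Tutte's $1$-factor theorem.
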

	
	\begin{lemma}\label{lem3}	\emph{(\cite{Chang})}
		Let $G$ be a  graph of order $n\geq 3$, $k\geq 3$ odd and $1\leq d \leq k$ with  $n\equiv d~(\mathrm{mod}~2)$. If $G$ is $k$-$d$-critical, then $G$ is $\text{GFC}_k$ when $d$ is odd and  $\text{GBC}_k$ when $d$ is even.
		
	\end{lemma}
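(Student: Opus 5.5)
The plan is to derive Lemma \ref{lem3} directly from the characterizations in Lemma \ref{lem2} and Lemma \ref{lem1}(2),(3). No new structure is needed; the argument is a comparison of the defining inequalities together with a parity check on $n$.

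\textbf{Step 1.} Assume $G$ is $k$-$d$-critical, with $k\geq 3$ odd, $1\leq d\leq k$ and $n\equiv d \pmod 2$. By Lemma \ref{lem2},
\[
\mbox{odd}(G-S)+k\cdot i(G-S)\leq k|S|-d \quad\text{for every } \varnothing\neq S\subseteq V(G).
\]

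\textbf{Step 2: $d$ odd.} Since $n\equiv d\pmod 2$, the order $n$ is odd. Because $d\geq 1$, we have $k|S|-d\leq k|S|-1$, so the displayed inequality gives
\[
\mbox{odd}(G-S)+k\cdot i(G-S)\leq k|S|-1
\]
for every nonempty $S$. Lemma \ref{lem1}(2) then yields that $G$ is $\text{GFC}_k$.

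\textbf{Step 3: $d$ even.} Now $n$ is even, and $d\geq 2$ because $d$ is even and positive. Hence $k|S|-d\leq k|S|-2$, and Lemma \ref{lem1}(3) yields that $G$ is $\text{GBC}_k$.

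The only points needing care are the parity bookkeeping, namely that $n$ has the parity required in Lemma \ref{lem1}, and the observation that an even $d\geq 1$ must satisfy $d\geq 2$. Neither is a real obstacle. The hypothesis $n\geq 3$ is shared by all the lemmas involved, so they apply without further adjustment.
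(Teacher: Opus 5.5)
Your argument is correct. Lemma~\ref{lem2} gives $\mbox{odd}(G-S)+k\cdot i(G-S)\leq k|S|-d$ for every nonempty $S$. Combined with $d\geq 1$ (resp.\ $d\geq 2$ when $d$ is even) and the parity $n\equiv d \pmod 2$, this yields exactly the hypotheses of Lemma~\ref{lem1}(2) (resp.\ (3)).

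The paper gives no proof of this statement; it cites it from \cite{Chang}. Your derivation is the direct one suggested by stating it right after Lemmas~\ref{lem1} and~\ref{lem2}, so there is nothing further to compare.
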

	
	\begin{lemma}\label{lem4}	\emph{(\cite{Zheng})}
		Let $n = s + \sum_{i=1}^t n_i $. If $n_1 \geq n_2 \geq \cdots \geq n_t \geq p \geq 1 $ and $ n_1 < n - s - p(t-1) $. Then
		\[
		e\bigl(K_s \vee (K_{n_1} + \cdots + K_{n_t})\bigr) < e\bigl(K_s \vee (K_{n-s-p(t-1)} + (t-1)K_p)\bigr).
		\]
	\end{lemma}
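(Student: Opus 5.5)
The plan is to reduce the inequality to a statement about the sum $\sum_{i=1}^t\binom{n_i}{2}$ and then use a convexity (vertex-shifting) argument. The join $K_s \vee (K_{n_1}+\cdots+K_{n_t})$ consists of a clique on $s$ vertices, all $s(n-s)$ edges between that clique and the remaining $n-s$ vertices, and the edges inside the cliques $K_{n_i}$. Therefore
\[
e\bigl(K_s \vee (K_{n_1} + \cdots + K_{n_t})\bigr)=\binom{s}{2}+s(n-s)+\sum_{i=1}^t\binom{n_i}{2}.
\]
The first two terms depend only on $n$ and $s$. The same formula applies to $K_s \vee (K_{n-s-p(t-1)} + (t-1)K_p)$, and that graph has the same $n$, $s$ and $t$. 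So it suffices to prove
\[
\sum_{i=1}^t\binom{n_i}{2}<\binom{n-s-p(t-1)}{2}+(t-1)\binom{p}{2}
\]
whenever $\sum_i n_i=n-s$, every $n_i\ge p$, and $n_1<n-s-p(t-1)$.

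The key step is an elementary exchange. Suppose $n_1\ge n_j$ and $n_j>p$ for some $j\ge 2$. Replace $(n_1,n_j)$ by $(n_1+1,n_j-1)$. This keeps the total $n-s$ and keeps every part at least $p$. It changes the sum by
\[
\binom{n_1+1}{2}-\binom{n_1}{2}+\binom{n_j-1}{2}-\binom{n_j}{2}=n_1-(n_j-1)=n_1-n_j+1\ge 1>0 .
\]
The hypothesis $n_1<n-s-p(t-1)$ means $\sum_{i\ge2}n_i>p(t-1)$. Hence such a $j$ exists, so at least one move is available and the first move strictly increases the sum.

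Next I repeat the move, always transferring into the part currently indexed $1$. After each move this part is still the largest, so the increment $n_1-n_j+1$ stays positive. Each move strictly decreases $\sum_{i\ge2}(n_i-p)$, which is a nonnegative integer, so the process terminates. It terminates exactly when $n_j=p$ for all $j\ge2$, that is, at the configuration $(n-s-p(t-1),p,\dots,p)$. Every step strictly increased $\sum\binom{n_i}{2}$, and at least one step occurred, so the strict inequality follows.

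I expect no serious obstacle. The only points needing care are keeping $n_1$ the largest part throughout, so that the increment stays positive, and using the hypothesis $n_1<n-s-p(t-1)$ to guarantee at least one move, which gives strictness. The degenerate case $t=1$ is excluded automatically, because then $n_1=n-s$ contradicts the hypothesis.
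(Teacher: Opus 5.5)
Your proof is correct and complete. The edge count $\binom{s}{2}+s(n-s)+\sum_i\binom{n_i}{2}$ reduces the claim to comparing $\sum_i\binom{n_i}{2}$ over splittings of $n-s$ into $t$ parts, each at least $p$. The shift $(n_1,n_j)\mapsto(n_1+1,n_j-1)$ changes this sum by $n_1-n_j+1\ge 1$, and part $1$ remains the largest throughout. The hypothesis $n_1<n-s-p(t-1)$ is exactly what guarantees at least one move; it also forces $t\ge 2$, which gives strictness.

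The paper does not prove this lemma; it imports it from the cited source. So there is no in-paper argument to compare against. Your shifting argument, which amounts to convexity of $x\mapsto\binom{x}{2}$, is the natural self-contained proof.

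It is also noticeably lighter than what the spectral counterpart, Lemma~\ref{lem5}, requires. There the spectral radius does not split additively over the cliques. One therefore has to argue through Perron vectors or quotient matrices rather than a closed edge formula.
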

	
	\begin{lemma} \label{lem5} \emph{(\cite{Fan})}
		Let  $n =s+ \sum_{i=1}^t n_i$. If $n_1 \geq n_2 \geq \cdots \geq n_t \geq 1 $ and $ n_1 < n - s - t + 1 $, then
		\[
		\rho\bigl(K_s \vee (K_{n_1} + \cdots + K_{n_t})\bigr) < \rho\bigl(K_s \vee (K_{n-s-t+1} + (t-1)K_1)\bigr).
		\]
	\end{lemma}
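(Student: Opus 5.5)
The plan is to prove the inequality by a sequence of elementary ``vertex moving'' operations. Each operation strictly increases the spectral radius. Together they transform $K_s \vee (K_{n_1}+\cdots+K_{n_t})$ into $K_s\vee (K_{n-s-t+1}+(t-1)K_1)$. Since $n_1<n-s-t+1$ and $n_1+\cdots+n_t=n-s$, some $n_j\ge 2$ with $j\ge 2$. The operation is: take a vertex $v$ of the clique $K_{n_j}$ ($j\ge 2$, $n_j\ge 2$) and move it into $K_{n_1}$. That is, delete the $n_j-1$ edges from $v$ to $K_{n_j}-v$ and add the $n_1$ edges from $v$ to $K_{n_1}$. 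The result is again of the form $K_s\vee(K_{n_1+1}+\cdots+K_{n_j-1}+\cdots)$, and the first clique is still the largest. After finitely many steps every clique other than the first is a single vertex, and the graph is exactly $K_s\vee (K_{n-s-t+1}+(t-1)K_1)$. So it suffices to prove that a single move strictly increases $\rho$.

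For one move, let $G=K_s\vee(K_{n_1}+\cdots+K_{n_t})$ and $G'$ the graph after moving $v$. I assume $s\ge 1$, so $G$ is connected. (If $s=0$, $\rho$ is just $\max n_i-1$ and the claim is immediate.) Let $x$ be the positive unit Perron vector of $A(G)$, with $\rho=\rho(G)$. By the symmetry of $G$ (automorphisms permuting vertices inside each clique and inside $K_s$) and uniqueness of the Perron vector, $x$ is constant on $K_s$, say equal to $y$, and constant on each $K_{n_i}$, say equal to $x_i$. The eigen-equation at a vertex of $K_{n_i}$ gives $\rho x_i=(n_i-1)x_i+sy$, hence
\[
x_i=\frac{sy}{\rho-n_i+1}.
\]
Here $\rho>n_1-1\ge n_i-1$, since $G$ properly contains $K_{n_1}$ and is connected. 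Therefore $x_1\ge x_j$ because $n_1\ge n_j$.

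Now compare Rayleigh quotients:
\[
x^{T}A(G')x-x^{T}A(G)x=2x_v\bigl(n_1x_1-(n_j-1)x_j\bigr)>0,
\]
using $x_v=x_j>0$, $x_1\ge x_j$ and $n_1>n_j-1$. Hence $\rho(G')\ge x^TA(G')x>\rho$, which gives strict inequality directly. Iterating over all moves yields the stated strict inequality.

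The only delicate point is justifying the block-constant structure of the Perron vector and the comparison $x_1\ge x_j$. Both follow from the explicit eigen-equation above once $\rho>n_1-1$ is noted. I expect no further obstacle: the strictness comes for free from the positive difference of quadratic forms, and the bookkeeping only needs the observation that $K_{n_1}$ remains the largest clique after each move.
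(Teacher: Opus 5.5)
Your proof is correct and complete. The paper does not prove this lemma; it quotes it from Fan et al., so there is no in-paper argument to match. What you give is a standard Perron-vector vertex-shifting argument, and each step checks out:
\begin{itemize}
\item the hypothesis forces $t\ge 2$ and some $n_j\ge 2$ with $j\ge 2$;
\item block-constancy of $x$ follows from simplicity of the Perron root under the clique automorphisms;
\item $\rho>n_1-1$ gives $x_1\ge x_j$;
\item the difference of quadratic forms, $2x_v\bigl(n_1x_1-(n_j-1)x_j\bigr)>0$, makes each move strict;
\item the first clique stays the largest after every move, and the disconnected case $s=0$ is handled separately.
\end{itemize}

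Compare this with how the paper makes its own spectral comparison in Lemma~\ref{lem9}. There it computes characteristic polynomials of small equitable quotient matrices and compares them on an interval. That would be awkward here, because the number of distinct clique sizes is unbounded, so the quotient matrix has arbitrary order. Your route needs no polynomial computation. It also gives more than the lemma: every single vertex shift already increases $\rho$. And if you stop shifting once all non-first cliques reach size $p$, the same argument proves the spectral analogue of Lemma~\ref{lem4} with lower bound $p$, essentially unchanged.
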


	For a square matrix $M$, let $\eta(M)$ represent its spectral radius. Given two $n \times n$ matrices $A = (a_{i,j})$ and $B = (b_{i,j})$, define $A \geq B$ if $a_{i,j} \geq b_{i,j}$ for every pair of indices $i,j \in \{1,\dots,n\}$. $A = (a_{ij} )$ is nonnegative if $a_{i,j} \geq 0$ for every pair of indices $i,j \in \{1,\dots,n\}$.

	\begin{lemma}\label{lem6}	\emph{(\cite{Ber})}
		Let $A = (a_{ij} )$ and $B = (b_{ij} $) be two nonnegative square matrices of
		order $n$. If $A \geq  B$, then $\eta(A) \geq \eta(B)$. Furthermore, if $A$ is irreducible, $A \geq B$ and $A\not = B$, then $\eta(A)>\eta(B)$.
	\end{lemma}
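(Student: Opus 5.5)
The plan is to compare the two matrices through their Perron eigenvectors, using only the basic Perron--Frobenius facts. Every nonnegative square matrix $M$ has a nonzero eigenvector $y\geq 0$ with $My=\eta(M)y$. If $M$ is irreducible, $\eta(M)$ also has a left eigenvector $x$ with all entries strictly positive, and every nonnegative eigenvector of $M$ is a positive multiple of its Perron vector, hence strictly positive.

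\emph{Weak inequality.} I would first prove $\eta(A)\geq\eta(B)$ with no irreducibility assumption, via Gelfand's formula. Since $0\leq B\leq A$, induction on $m$ gives $0\leq B^m\leq A^m$ entrywise for every $m\geq 1$. For the maximum row sum norm $\|\cdot\|_\infty$, which is monotone on nonnegative matrices, this gives $\|B^m\|_\infty\leq \|A^m\|_\infty$. Taking $m$-th roots and letting $m\to\infty$ yields $\eta(B)\leq\eta(A)$. If $A$ is irreducible, a shorter route exists. Take $y\geq 0$, $y\neq 0$, with $By=\eta(B)y$, and $x>0$ with $x^{T}A=\eta(A)x^{T}$. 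Then
\[
\eta(A)\,x^{T}y \;=\; x^{T}Ay \;\geq\; x^{T}By \;=\; \eta(B)\,x^{T}y ,
\]
and $x^{T}y>0$, so $\eta(A)\geq\eta(B)$.

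\emph{Strict inequality.} Now assume $A$ is irreducible, $A\geq B$, $A\neq B$, and suppose for contradiction that $\eta(A)=\eta(B)=r$. With $y$ as above,
\[
Ay\geq By=ry .
\]
Multiplying the nonnegative vector $Ay-ry$ on the left by $x^{T}$ gives $x^{T}Ay-r\,x^{T}y=0$. Since $x>0$, this forces $Ay=ry$. So $y$ is a nonnegative eigenvector of the irreducible matrix $A$ for $\eta(A)$, hence a multiple of the Perron vector, hence $y>0$. Then $(A-B)y=Ay-By=0$. Here $A-B$ is nonnegative and $y$ is strictly positive, so every entry of $A-B$ is zero, i.e.\ $A=B$, a contradiction. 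Therefore $\eta(A)>\eta(B)$.

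\emph{Main obstacle.} The only delicate point is that $B$ need not be irreducible, so its Perron vector $y$ may have zero entries. Because of this, one cannot conclude $x^{T}(A-B)y>0$ directly. The argument above avoids the issue by first showing that equality of spectral radii would make $y$ an eigenvector of $A$, which upgrades $y$ to a strictly positive vector.
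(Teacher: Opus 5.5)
Your proof is correct. The paper gives no proof of this lemma: it quotes it from Berman and Plemmons and uses it as a black box. In its applications, $A$ is the adjacency matrix of a connected graph and $B$ that of a proper spanning subgraph, so the irreducible case is the one that matters.

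You instead supply the standard self-contained argument, which shows exactly which pieces of Perron--Frobenius theory are needed.
\begin{itemize}
\item The weak inequality, via $0\le B^{m}\le A^{m}$ and Gelfand's formula with the monotone max-row-sum norm, uses no Perron--Frobenius theory at all and holds for arbitrary nonnegative $A$.
\item The strict inequality uses two facts: a nonnegative eigenvector $y$ for $\eta(B)$ exists for any nonnegative $B$; and an irreducible $A$ has a positive left Perron vector $x$ and a simple eigenvalue $\eta(A)$ with a positive eigenvector.
\end{itemize}

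Your key step is right. From $x^{T}(Ay-ry)=0$ with $x>0$ and $Ay-ry\ge 0$ you get $Ay=ry$. This upgrades $y$ to a strictly positive vector and so handles the only real danger, that $B$ is reducible and $y$ has zero entries.

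Citing keeps the paper short; your version makes the dependence explicit. One minor remark: you can get $y>0$ without invoking the statement about all nonnegative eigenvectors. Since $(I+A)^{n-1}$ is entrywise positive for irreducible $A$, we have $(1+r)^{n-1}y=(I+A)^{n-1}y>0$.
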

	
	Suppose $M$ is a square matrix of order $n$, $\mathcal{N}=\{1, 2, \ldots, n\}$ and $\Pi:\mathcal{N}=\mathcal{N}_1\cup \mathcal{N}_2\cup \cdots \cup \mathcal{N}_k$ is a partition of $\mathcal{N}$.
	For each pair of indices $i,j \in \{1,2,\dots,n\}$, let $M_{i,j}$ be the submatrix of $M$ induced by rows $\mathcal{N}_i$ and columns $\mathcal{N}_j$. Then $M$ is similar to the matrix $M'$ below

	\[M'=\begin{pmatrix}
		M_{1, 1}&M_{1, 2}& \cdots & M_{1, k} \\
		M_{2, 1}& M_{2, 2}& \cdots &M_{2, k} \\
		\vdots&\vdots&\ddots&\vdots\\
		M_{k, 1}& M_{k, 2}& \cdots &M_{k, k} \\
	\end{pmatrix}.\]
	
	The quotient matrix of $M$ with respect to $\Pi$, written as $B_{\Pi}=(b_{i, j})$, is a $k\times k$ matrix whose entry $b_{i, j}$ represents the average row sum over the block $M_{i, j}$. The partition $\Pi$ is termed equitable if every block $M_{i, j}$ has constant row sum; in such a case, the quotient matrix $B_{\Pi}$ is referred to as equitable.
	
	%Also, we say that the quotient matrix $B_{\Pi}$ is equitable if $\Pi$ is an equitable partition of $M$.
	\begin{lemma}\label{lem7}	\emph{(\cite{Bro, God})}
		Let $M$ be a real symmetric matrix and $B_{\Pi}$ an equitable quotient matrix of $M$. Then the eigenvalues of $B_{\Pi}$ are also eigenvalues of $M$. Furthermore, if $M$ is nonnegative and irreducible, then the largest eigenvalue of $B_{\Pi}$ is also the largest eigenvalue of $M$.	
	\end{lemma}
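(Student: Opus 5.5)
The plan is to encode the partition $\Pi$ in a matrix and reduce both claims to a single intertwining identity. Let $P$ be the $n\times k$ characteristic matrix of $\Pi$: $P_{u,i}=1$ if $u\in\mathcal{N}_i$ and $0$ otherwise. Its columns have disjoint nonempty supports, so $P$ has full column rank $k$ and $P^{T}P=D:=\mathrm{diag}(|\mathcal{N}_1|,\dots,|\mathcal{N}_k|)$. For a row $u\in\mathcal{N}_i$, the $(u,j)$ entry of $MP$ is the row sum of $M_{i,j}$ at row $u$. Equitability says this row sum equals $b_{i,j}$ for every $u\in\mathcal{N}_i$, which is exactly
\[
MP=PB_{\Pi}.
\]

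First claim: every eigenvalue of $B_{\Pi}$ is an eigenvalue of $M$. Let $B_{\Pi}x=\lambda x$ with $x\neq 0$, allowing complex $x$ if needed. Then
\[
M(Px)=PB_{\Pi}x=\lambda Px,
\]
and $Px\neq 0$ because $P$ has full column rank. Hence $\lambda$ is an eigenvalue of $M$. Multiplying $MP=PB_{\Pi}$ on the left by $P^{T}$ also gives $B_{\Pi}=D^{-1}P^{T}MP$. This is similar to the symmetric matrix $D^{-1/2}P^{T}MPD^{-1/2}$, so all eigenvalues of $B_{\Pi}$ are real and ``largest eigenvalue of $B_{\Pi}$'' is meaningful.

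Second claim: assume in addition that $M$ is nonnegative and irreducible. Then $B_{\Pi}$ is nonnegative, since its entries are averages of row sums of nonnegative blocks. By the Perron--Frobenius theorem for nonnegative matrices (no irreducibility of $B_{\Pi}$ is needed), $\eta(B_{\Pi})$ is an eigenvalue of $B_{\Pi}$ with a nonnegative nonzero eigenvector $x$. Because $B_{\Pi}$ has real spectrum, $\eta(B_{\Pi})$ is its largest eigenvalue. The vector $y=Px$ is nonnegative, nonzero, and satisfies $My=\eta(B_{\Pi})y$. For an irreducible nonnegative matrix, the only eigenvalue having a nonnegative nonzero eigenvector is $\eta(M)$, so $\eta(B_{\Pi})=\eta(M)$. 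Since $M$ is symmetric, $\eta(M)$ is its largest eigenvalue, which proves the statement.

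The only step needing care is this last Perron--Frobenius uniqueness fact, and I would prove it directly. Let $z>0$ be the Perron vector of $M$, with $Mz=\eta(M)z$, and suppose $My=\lambda y$ with $y\ge 0$, $y\ne 0$. By symmetry of $M$,
\[
\lambda z^{T}y=z^{T}My=(Mz)^{T}y=\eta(M)z^{T}y.
\]
Since $z^{T}y>0$, this forces $\lambda=\eta(M)$. Everything else is routine bookkeeping with $P$.
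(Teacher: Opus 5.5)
Your proof is correct. The paper gives no argument for this lemma and only cites Brouwer--Haemers and Godsil; your route is exactly the standard proof found there: the intertwining identity $MP=PB_{\Pi}$ for the characteristic matrix $P$, lifting eigenvectors via $x\mapsto Px$, and Perron--Frobenius to show that the lifted nonnegative eigenvector can only belong to $\eta(M)$. One small economy: the spectrum of $B_{\Pi}$ is already real by your first claim, since every eigenvalue of $B_{\Pi}$ is an eigenvalue of the real symmetric matrix $M$, so the similarity to $D^{-1/2}P^{T}MPD^{-1/2}$ is optional.
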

	
	\begin{lemma}\label{lem8}
		For positive integers $n$, $\delta$ and $s$, let  $G_s=K_s \vee (K_{n-2s}+\overline{K_s})$ with $\delta \leq s \leq \lfloor \frac{n}{2} \rfloor$. The following hold:
		
		$\mathrm{(i)}$~$e(G_\delta)> e(G_s)$ with $\delta < s \leq \lfloor \frac{n}{2} \rfloor$ for $n>6\delta+2$ and $n=6\delta+1$.
		
		$\mathrm{(ii)}$~$e(G_\delta)= e(G_{\lfloor \frac{n}{2} \rfloor})> e(G_s)$ with $\delta < s < \lfloor \frac{n}{2} \rfloor$ for $n=6\delta+2$ and $n=6\delta-1$.
		
		$\mathrm{(iii)}$~$e(G_{\lfloor \frac{n}{2} \rfloor})> e(G_s)$ with $\delta \leq s < \lfloor \frac{n}{2} \rfloor$ for $n<6\delta-1$ and $n=6\delta$.
	\end{lemma}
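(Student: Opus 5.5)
The plan is a direct computation: write $e(G_s)$ as an explicit quadratic in $s$, note that it is strictly convex, and then compare only the two endpoint values $s=\delta$ and $s=\lfloor n/2\rfloor$.

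First I count the edges of $G_s=K_s\vee(K_{n-2s}+\overline{K_s})$. The clique $K_s$ contributes $\binom{s}{2}$ edges and $K_{n-2s}$ contributes $\binom{n-2s}{2}$. The join contributes $s(n-2s)+s\cdot s=s(n-s)$. Expanding and collecting terms gives
\[
e(G_s)=\tfrac12\bigl(3s^2-(2n-1)s+n^2-n\bigr)=:f(s).
\]
This is a strictly convex quadratic in $s$. Hence on the integer interval $\delta\le s\le m$, where $m:=\lfloor n/2\rfloor$, every interior point satisfies $f(s)<\max\{f(\delta),f(m)\}$. So everything reduces to comparing $f(\delta)$ with $f(m)$. (The statements implicitly assume $\delta<m$; when the interval is a single point the claims are vacuous or trivial.)

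The key identity is
\[
2\bigl(f(\delta)-f(m)\bigr)=(m-\delta)\bigl[(2n-1)-3(\delta+m)\bigr].
\]
It is obtained from $3(\delta^2-m^2)-(2n-1)(\delta-m)$. Since $m>\delta$, the sign of $f(\delta)-f(m)$ is the sign of $g:=2n-1-3\delta-3m$. I then split by the parity of $n$.

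If $n$ is even, then $m=n/2$ and $g=n/2-1-3\delta$. So $g>0$ iff $n>6\delta+2$, $g=0$ iff $n=6\delta+2$, and $g<0$ iff $n<6\delta+2$. Among even $n$, the last case covers $n\le 6\delta$ and in particular $n=6\delta$.

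If $n$ is odd, then $m=(n-1)/2$ and $g=(n+1)/2-3\delta$. So $g>0$ iff $n>6\delta-1$, which for odd $n$ means $n\ge 6\delta+1$. Also $g=0$ iff $n=6\delta-1$, and $g<0$ iff $n<6\delta-1$.

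Combining these with strict convexity on the interior yields the three cases:
\begin{enumerate}[(i)]
\item $f(\delta)$ is the strict maximum when $n>6\delta+2$ or $n=6\delta+1$.
\item $f(\delta)=f(m)$ is strictly larger than every interior value when $n=6\delta+2$ or $n=6\delta-1$.
\item $f(m)$ is the strict maximum when $n<6\delta-1$ or $n=6\delta$.
\end{enumerate}
In case (iii) I also need $f(m)>f(\delta)$ at the endpoint $s=\delta$, which is exactly $g<0$.

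The only delicate point is the parity bookkeeping: checking that the stated ranges exhaust the cases correctly. For example, odd $n$ with $n>6\delta+2$ falls under (i) via the odd branch, and even $n<6\delta$ falls under (iii). No real obstacle is expected.
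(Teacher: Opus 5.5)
Your proof is correct and takes the same route as the paper. Both arguments write $e(G_s)=\frac{1}{2}\bigl(3s^2-(2n-1)s+n^2-n\bigr)$, use convexity to reduce to the endpoints $s=\delta$ and $s=\lfloor n/2\rfloor$, and split by the parity of $n$; your factorization $2\bigl(f(\delta)-f(m)\bigr)=(m-\delta)\bigl[(2n-1)-3(\delta+m)\bigr]$ is just the algebraic form of the paper's comparison of the distances $\frac{2n-1}{6}-\delta$ and $\lfloor n/2\rfloor-\frac{2n-1}{6}$ from the axis of symmetry.
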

	
	\begin{proof}
		Since $e(G_s)=\frac{3s^2}{2}+(\frac{1}{2}-n)s+\frac{n^2-n}{2}$, it is a quadratic
		function in the variable $s$ that opens upward and its axis of symmetry is at $s = \frac{2n-1}{6}$. Therefore, the maximum value is only attained at $s=\delta$ and $s=\lfloor \frac{n}{2} \rfloor$.
		If $\frac{2n-1}{6}-\delta>\lfloor \frac{n}{2} \rfloor-\frac{2n-1}{6}$,  then $n>6\delta+2$ for even $n$ and $n>6\delta-1$ for odd $n$, and so $e(G_\delta)>e(G_{\lfloor \frac{n}{2} \rfloor})$. If $\frac{2n-1}{6}-\delta=\lfloor \frac{n}{2} \rfloor-\frac{2n-1}{6}$,  then $n=6\delta+2$ or $n=6\delta-1$, and so $e(G_\delta)=e(G_{\lfloor \frac{n}{2} \rfloor})$. If $\frac{2n-1}{6}-\delta<\lfloor \frac{n}{2} \rfloor-\frac{2n-1}{6}$,  then $n<6\delta+2$ for even $n$ and $n<6\delta-1$ for odd $n$, and so $e(G_\delta)<e(G_{\lfloor \frac{n}{2} \rfloor})$.
	\end{proof}
	
	\begin{lemma}\label{lem9}
		Let $n$, $\delta$ and $s$ be positive integers.  Denote $G_s=K_s \vee (K_{n-2s}+\overline{K_s}))$ with $\delta \leq s \leq \lfloor \frac{n}{2} \rfloor$. If $n\geq 8\delta+4$, then $\rho(G_\delta)>\rho(G_s)$ with $\delta < s \leq \lfloor \frac{n}{2} \rfloor$.
	\end{lemma}
	\begin{proof}
		
		We  prove  the result  by distinguishing the following two cases.

		\noindent{\textbf{Case 1.}} $\delta < s < \frac{n}{2} $.
		
		For the partition $V(G_s) = V(K_{s}) \cup V(K_{n-2s})\cup V(\overline{K_{s}})$ of $G_s$, the quotient matrix of $A(G_s)$ is 	
		\[ M_s=\begin{pmatrix}
			s-1&n-2s&s\\
			s&n-2s-1&0\\
			s&0&0
		\end{pmatrix}.\]
		and
		\[
		f_s(x)=|xI_3-M_s|=x^3+(s-n+2)x^2-(s^2-s+n-1)x+(n-1)s^2-2s^3.
		\]
		Similarly,
		\[
		f_\delta(x)=|xI_3-M_\delta|=x^3+(\delta-n+2)x^2-(\delta^2-\delta+n-1)x+(n-1)\delta^2-2\delta^3.
		\]
		Thus
		\[
		f_s(x)-f_\delta(x)=-(s-\delta)(-x^2+(\delta+s-1)x+\delta+s-\delta n+2\delta s-ns+2\delta^2+2s^2).
		\]
		Denote
		$$
		g(x)=-x^2+(\delta+s-1)x+\delta+s-\delta n+2\delta s-ns+2\delta^2+2s^2.
		$$
		Note that $\rho(K_\delta \vee (K_{n-2\delta}+\overline{K_\delta}))>\rho(K_{n-\delta})=n-\delta-1$.
		
		\begin{claim} \label{claim1}
			$g(x)<0$ for $x\in [n-\delta-1, +\infty)$.
		\end{claim}
		
		\noindent\emph{Proof of Claim 1.}~Since $g(x)$ is a quadratic function in the variable $x$ that opens downward and its axis of symmetry is at $x = \frac{\delta+s-1}{2}$, $n \geq 8\delta+4$ and $\delta < s <\frac{n}{2}$, we have $(n-\delta-1)-(\frac{\delta+s-1}{2}) >\frac{3}{4}n-\frac{3}{2}\delta-\frac{1}{2}>0$. Hence, it suffices to prove that $g(n-\delta-1)<0$.
		
		If $x=n-\delta-1$, then
		$$
		g(x)=2s^2+\delta s+\delta-\delta n-(\delta-1)(\delta-n+1)+2\delta^2-(\delta-n+1)^2.
		$$
		Let $h(s)=g(n-\delta-1)$.
		It is a quadratic function in the variable $s$ that opens upward and its axis of symmetry is at $x = -\frac{\delta}{4}$.  Then $h(s)$ is a monotonically increasing function on $\delta < s <\frac{n}{2}$.
		Recall that $n \geq 8\delta+4$. If $n$ is odd, then $h(\frac{n-1}{2})=-\frac{n^2}{2}+\frac{5\delta n}{2}-\frac{3\delta}{2}+\frac{1}{2}=\frac{n}{2}(5\delta-n)+\frac{1-3\delta}{2}<0$; If $n$ is even, then $h(\frac{n}{2}-1)=-\frac{n^2}{2}+\frac{(5\delta-2) n}{2}-2\delta+2=\frac{n}{2}(5\delta-n-2)+2(1-\delta)<0$. Hence $h(s)<0$ with $\delta < s <\frac{n}{2}$, that is, $g(n-\delta-1)<0$. Since  $g(x)$ is monotonically decreasing in $[n-\delta-1, +\infty)$, we have $g(x)<0$ for $x\in [n-\delta-1, +\infty)$.

		By Claim \ref{claim1},
		$f_s(x)-f_\delta(x)=-(s-\delta)g(x)>0$ for $x\in [n-\delta-1, +\infty)$. Denote by  $\theta_s$ and $\theta_\delta$ the largest eigenvalues of $G_s$ and $G_\delta$, respectively. By Lemma \ref{lem7}, $\theta_s$ is the largest root of $f_s(x) = 0$ and $\theta_\delta$ is the largest root of $f_\delta(x) = 0$. 	Note that $f_s(\theta_s)-f_\delta(\theta_s)=0-f_\delta(\theta_s)>0$, that is, $f_\delta(\theta_s)<0$.  Thus $\theta_s<\theta_\delta$, and then $\rho(G_s)<\rho(G_\delta)$.
		
		{\textbf{Case 2.}} $s= \frac{n}{2} $. Thus $n$ must be even.
		
		In this case, let $\widetilde{G}=G_{\frac{n}{2}}=K_\frac{n}{2} \vee \overline{K_\frac{n}{2}}$. For the partition $V(\widetilde{G}) = V(K_{\frac{n}{2}}) \cup V(\overline{K_\frac{n}{2}})$, the quotient matrix of $A(\widetilde{G})$ is 	
		\[ \widetilde{M}=\begin{pmatrix}
			\frac{n-2}{2}&\frac{n}{2}\\
			\frac{n}{2}&0
		\end{pmatrix},\]
		and
		$\widetilde{f}(x)=|xI_2-\widetilde{M}|=x^2-(\frac{n}{2}-1)x-\frac{n^2}{4}$.
		Let $\widetilde{\theta}$ be the largest root of $\widetilde{f}(x)=0$. Then $\widetilde{\theta}=\rho(\widetilde{G})$ by Lemma \ref{lem7}.
		Since $\rho(\widetilde{G})>\rho(K_\frac{n}{2} \vee \overline{K_1})=\frac{n}{2}$, the largest root of $\widetilde{f}(x)=0$ is the same as that of $(x-\frac{n}{2})\widetilde{f}(x)$.
		
		Let $$
		g(x)=f_\delta(x)-\left(x-\frac{n}{2}\right)\widetilde{f}(x) =(\delta+1)x^2+\left(-\delta^2+\delta-\frac{n}{2}+1\right)x-2\delta^3+\delta^2n -\delta^2-\frac{n^3}{8}.
		$$
		
		\begin{claim}
			$g(x)<0$ for $x\in (\frac{n}{2},n-1]$.
		\end{claim}
		
		\noindent\emph{Proof of Claim 2.}~Since $n\geq 8\delta+4$, we have
		\begin{align*}
			g\left(\frac{n}{2}\right)& =-\frac{n^3}{8}+\frac{\delta}{4}n^2+\frac{\delta^2+\delta+1}{2}n-2\delta^3-\delta^2\\
			&\leq -\frac{6\delta+4}{8}n^2+\frac{\delta^2+\delta+1}{2}n-2\delta^3-\delta^2\\
			&\leq -\frac{(11\delta^2+13\delta+3)}{2}n-2\delta^3-\delta^2\\
			&<0,
		\end{align*}	
		and
		\begin{align*}
			g(n-1)&=-\frac{n^3}{8}+(\delta+\frac{1}{2})n^2-(\delta+\frac{1}{2})n-2\delta^3\\
			%&\leq (-\frac{8\delta+4}{8}+\delta+\frac{1}{2})n^2-(\delta+\frac{1}{2})n-2\delta^3\\
			&\leq -(\delta+\frac{1}{2})n-2\delta^3\\&<0.
		\end{align*}

		Note that $g(x)$ is a quadratic function opening upward and hence a convex function. Then $g(\frac{n}{2})<0$ and $g(n-1)<0$. Thus, $g(\lambda \frac{n}{2}+(1-\lambda)(n-1))\leq \lambda g(\frac{n}{2})+(1-\lambda)g(n-1)<0$ with $0\leq \lambda \leq 1$, and then $g(x)<0$ for $x\in (\frac{n}{2},n-1]$.
		
		Therefore, $0>g(\widetilde{\theta})= f_\delta(\widetilde{\theta})- (\widetilde{\theta}-\frac{n}{2})\widetilde{f}(\widetilde{\theta)} =f_\delta(\widetilde{\theta})$. This means that $\widetilde{\theta}<\theta_{\delta}$, and then $\rho(\widetilde{G})<\rho(G_\delta)$.
	\end{proof}
	
	\section{Size and spectral radius conditions for a graph to be $k$-$d$-critical with odd $k\geq 3$ involving minimum degree.}
	
	For a graph $G$ of order $n$ with minimum degree $\delta(G)\geq \delta$, we establish  tight sufficient conditions in terms of size or spectral radius for $G$ to be $k$-$d$-critical, where $k\geq 3$ is an odd integer. Theorem  \ref{thm1} gives the size condition for $G$ to be $k$-$d$-critical. Theorem    \ref{thm2} gives the spectral radius condition for $G$ to be $k$-$d$-critical. Furthermore, by Lemma \ref{lem3}, we also establish  tight sufficient conditions in terms of size or spectral radius for $G$ to be $\text{GFC}_k$ or $\text{GBC}_k$.
	
	\noindent \textbf{Remark.}
	Observe that if $d=k$ and $n$ is even, then the complete graph $K_n$ is not $k$-$d$-critical.  As a result, it is impossible to establish an upper bound on either the size or the spectral radius beyond which every graph would necessarily be $k$-$d$-critical. In the following discussion, we only consider the case $1\leq d< k$.
	
	\begin{theorem}\label{thm1}
		For a positive integer $\delta$, let $G$ be a graph of order $n\geq 3$ with minimum degree $\delta(G) \geq \delta$, $k\geq 3$ odd and $1\leq d < k$ with  $n\equiv d~(\mathrm{mod}~2)$.
		
		$\mathrm{(i)}$~If $n>6\delta+2$ or $n=6\delta+1$ and $e(G)\geq e(K_\delta \vee (K_{n-2\delta}+\overline{K_\delta}))$, then $G$ is $k$-$d$-critical unless $ G = K_\delta \vee (K_{n-2\delta}+\overline{K_\delta})$.
		
		$\mathrm{(ii)}$~If $n=6\delta+2$ or $n=6\delta-1$ and $e(G)\geq e(K_\delta \vee (K_{n-2\delta}+\overline{K_\delta}))$, then $G$ is $k$-$d$-critical unless $ G = K_\delta \vee (K_{n-2\delta}+\overline{K_\delta})$ or $G =K_{\lfloor \frac{n}{2} \rfloor} \vee \overline{K_{\lceil \frac{n}{2} \rceil}} $.
		
		$\mathrm{(iii)}$~If $n<6\delta-1$ or $n=6\delta$ and $e(G)\geq e(K_{\lfloor \frac{n}{2} \rfloor} \vee \overline{K_{ \lceil \frac{n}{2} \rceil}})$, then $G$ is $k$-$d$-critical unless $ G = K_{\lfloor \frac{n}{2} \rfloor} \vee \overline{K_{\lceil \frac{n}{2} \rceil}}$.
	\end{theorem}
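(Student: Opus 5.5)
Suppose $G$ is not $k$-$d$-critical. By Lemma \ref{lem2} there is a nonempty $S\subseteq V(G)$ with $\mbox{odd}(G-S)+k\cdot i(G-S)\geq k|S|-d+1$. Let $s=|S|$, $i=i(G-S)$ and $q=\mbox{odd}(G-S)$. The plan is to use this $S$ to show $G$ is a spanning subgraph of some $K_{s'}\vee(K_{n-2s'}+\overline{K_{s'}})$, and then compare sizes using Lemmas \ref{lem4} and \ref{lem8}.

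\textbf{Step 1: $i\geq s$.} Since $d<k$, we have $k s-d+1\geq k(s-1)+2$. Suppose $i\leq s-1$. Then $q\geq k(s-i)-d+1\geq 2$, so $G-S$ has at least $k(s-i)-d+1$ nontrivial odd components. Each such component has order at least $3$. I expect this to produce enough components that the edge count drops below the bound, but handling it in general is fiddly. Instead, I would aim to show directly that a configuration with $i\geq s$ isolated vertices is forced, or at least that the resulting extremal graph is dominated. Roughly: $G$ is a spanning subgraph of $K_s\vee(K_{n_1}+\dots+K_{n_t}+\overline{K_i})$. Merging all nontrivial components into one clique only increases the edge count, by Lemma \ref{lem4} with $p=1$ applied to the clique parts. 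It also turns isolated-vs-odd counts into cases.

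A cleaner route, which is what I would actually try first, is to note that $q\le (n-s-i)/3$ and combine this with $q+ki\ge ks-d+1$. When $i\le s-1$, I then bound $e(G)\le \binom{n-i}{2}+is$ restricted so that the number of odd components is large. I would show this is less than $e(G_\delta)$ and less than $e(G_{\lfloor n/2\rfloor})$ by direct computation. This is the main obstacle, because the bound must hold uniformly in the parameters.

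\textbf{Step 2: reduce to $s\ge\delta$ and $i=s$.} Now assume $i\geq s$. Each isolated vertex of $G-S$ has all its neighbours in $S$, so $\delta\le s$. Then $G$ is a spanning subgraph of $K_s\vee(K_{n-s-i}+\overline{K_i})$. Moving isolated vertices beyond the first $s$ into the clique part increases the edge count, and the minimum-degree condition still holds. Hence $e(G)\le e(K_s\vee(K_{n-2s}+\overline{K_s}))=e(G_s)$ with $\delta\le s\le\lfloor n/2\rfloor$. If $n-2s$ happens to be $0$ or small, the graph is just $K_{s}\vee\overline{K_{n-s}}$, which is consistent with $s=\lfloor n/2\rfloor$ giving $K_{\lfloor n/2\rfloor}\vee\overline{K_{\lceil n/2\rceil}}$. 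Here one should check that $s\le \lfloor n/2\rfloor$ follows from $i\ge s$, which it does since $s+i\le n$.

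\textbf{Step 3: apply Lemma \ref{lem8}.} In each of the ranges (i)–(iii), Lemma \ref{lem8} gives $e(G_s)\le e(G_{\delta})$, or $e(G_s)\le e(G_{\lfloor n/2\rfloor})$, or the tie, with strict inequality except at the listed extremal values of $s$. Combined with the hypothesis $e(G)\ge$ the corresponding bound, equality must hold throughout. Equality forces $G=G_s$ for an extremal $s$, which yields exactly the exceptional graphs.

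\textbf{Step 4: check the exceptions.} Finally, verify that these exceptional graphs are indeed not $k$-$d$-critical. Take $S$ to be the $K_\delta$ (resp. $K_{\lfloor n/2\rfloor}$) part. Then $k\cdot i(G-S)\ge ks>ks-d$, which violates the condition in Lemma \ref{lem2}.
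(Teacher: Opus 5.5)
\textbf{Step 1 is a genuine gap.} Your Steps 2--4 are sound and match the paper's endgame: bound $e(G)$ by $e(G_s)$ for some $G_s=K_s\vee(K_{n-2s}+\overline{K_s})$ with $\delta\le s\le\lfloor n/2\rfloor$, then use Lemma~\ref{lem8} and the equality case. But Step 1 is not a proof, and it is where the real content lies.

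As a structural claim, ``$i\ge s$'' is false. For even $n\ge 10$, the graph $K_1\vee(K_{n-7}+2K_3)$ is not $k$-$(k-1)$-critical: take $S$ to be the apex, so $\mbox{odd}(G-S)=3>1=k|S|-d$. Yet every nonempty $S$ has $i(G-S)<|S|$. So what you need is that configurations with $i\le s-1$ have too few edges, and neither of your sketches gives this. Literally merging the nontrivial components into one clique destroys the odd components. The merged graph $K_s\vee(K_{n-s-i}+\overline{K_i})$ no longer violates Lemma~\ref{lem2}, and its size is useless as an upper bound (for $i=0$ it is $\binom n2$). The ``direct computation'' route is never carried out.

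\textbf{The missing idea (the paper's).} Compress in the opposite direction, keeping the number $t\ge q+i$ of components of $G-S$. Let the component orders be $n_1\ge\cdots\ge n_t$. Since $q\ge 2$, at least two components are nontrivial, so $n_1<n-s-t+1$. Lemma~\ref{lem4} with $p=1$ then gives $e(G)<e(H)$ for $H=K_s\vee(K_{n-s-t+1}+(t-1)K_1)$. Now $H-S$ has at least $t-1\ge q+i-1$ isolated vertices, and $k(q+i-1)\ge q+ki$ because $(k-1)q\ge k$. Hence $\mbox{odd}(H-S)+k\,i(H-S)\ge k|S|-d+1$ still holds while $\mbox{odd}(H-S)\le 1$. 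Your own computation ($i\le s-1\Rightarrow q\ge 2$), applied to $H$, gives $i(H-S)\ge s$. So $e(G)<e(H)\le e(G_s)$, exactly as in your Step 2. The paper phrases this as a contradiction with edge-maximality, after first absorbing even components into an odd one.

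\textbf{A caveat you still have to handle.} Your Step 2 gets $s\ge\delta$ from an isolated vertex of $G-S$, but $H$ has minimum degree only $s$. Suppose the original $G-S$ has no isolated vertex and $s<\delta$; the example above with $\delta=3$, $s=1$ is such a case. Then $G_s$ lies outside the range of Lemma~\ref{lem8} and can have more edges than $G_\delta$, so the chain breaks. In that case use instead that every component of $G-S$ has order at least $\delta-s+1$, and that there are at least $k(s-1)+2$ components. Then compare $e(K_s\vee(K_{n-s-(t-1)(\delta-s+1)}+(t-1)K_{\delta-s+1}))$, which bounds $e(G)$ by Lemma~\ref{lem4}, directly with $e(G_\delta)$; this comparison does go through. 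The paper's maximality argument passes over this case as well.
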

	
	\begin{proof}
		By contradiction, assume that $G$ is not $k$-$d$-critical.  Choose a graph $G$ whose size is as large as possible.
		By Lemma \ref{lem2}, there exists $\varnothing \neq S \subseteq V(G)$ such that \begin{equation}\label{1}
			\mbox{odd}(G-S)+k\cdot i(G-S)\geq k|S|-d+1. \tag{1}
		\end{equation}
		
		Let $|S|=s$ and $i(G-S)=i$. Suppose that the components of $G-S$ are $G_{1}, G_{2}, \ldots,G_{p}$ and $|V(G_1)|=n_1, |V(G_2)|=n_2, \ldots, |V(G_p)|=n_p$ with $n_1 \geq n_2 \geq \cdots \geq n_p \geq 1 $.  Since $G$ has the maximum number of edges, all possible edges are present between $S$ and each connected component in $G-S$. Moreover, each connected component of $G-S$ and $S$ itself are cliques. Thus  $G=K_s \vee (K_{n_1}+K_{n_2}+\cdots+K_{n_p})$.
		
		If $\mbox{odd}(G-S)>1$, then $G-S$ contains no even component of $G$. Otherwise, we can add edges between one odd component and all the even components. The resulting graph still satisfies (\ref{1}) and has more edges, a contradiction. $\mbox{odd}(G-S)>1$ implies that $n_1<n-s-p+1$. By Lemma \ref{lem4}, $e(K_s \vee (K_{n_1}+K_{n_2}+\cdots+K_{n_p})) < e(K_s \vee (K_{n-s-p+1}+(p-1)K_1))$. Note that $K_s \vee (K_{n-s-p+1}+(p-1)K_1)$ still satisfies (\ref{1}) and has more edges. This contradicts the assumption. Therefore, $\mbox{odd}(G-S)\leq 1$.
		
		If $i\leq s-1$, by (\ref{1}), we have $ks-d+1\leq 1+ki\leq 1+k(s-1)$. Then $d\geq k$, a contradiction. Thus, $i\geq s$.
		
		We now proceed to discuss two cases as follows.
		
		{\textbf{Case 1.}} There exists some nontrivial component in $G-S$. Since $i\geq s$ and $d\geq 1$, (\ref{1}) always holds. We can add all possible edges so that the nontrivial components form a clique in $G-S$. Then $G=K_s \vee (K_{n-s-i}+\overline{K_i})$. If $i>s$, we can join an isolated vertex in $G-S$ to all vertices in $V(K_{n-s-i})$. The resulting graph still satisfies (\ref{1}) and has more edges, a contradiction.  Hence $i=s$, and $G=K_s \vee (K_{n-2s}+\overline{K_s})$. Since $\delta(G)\geq \delta$ and $K_{n-2s}$ is a nontrivial component, $\delta \leq s \leq \frac{n-2}{2}$.
		
		{\textbf{Case 2.}} There exist no nontrivial components in $G-S$. In this case, $G=K_s \vee \overline{K_{n-s}}$. Since $G$ has the maximum number of edges, $ G = K_{\lfloor \frac{n}{2} \rfloor} \vee \overline{K_{\lceil \frac{n}{2} \rceil}}$.
		
		Now we compare the number of edges in the two cases.
		Denote $G_s=K_s \vee (K_{n-2s}+\overline{K_s})$. It is noted that $K_{\lfloor \frac{n}{2} \rfloor} \vee \overline{K_{\lceil \frac{n}{2} \rceil}}=K_{\frac{n-1}{2} } \vee \overline{K_{ \frac{n+1}{2} }}=K_{\frac{n-1}{2} } \vee (K_1+\overline{K_{ \frac{n-1}{2} }})=G_{\frac{n-1}{2}}$ with odd $n$; $K_{\lfloor \frac{n}{2} \rfloor} \vee \overline{K_{\lceil \frac{n}{2} \rceil}}=K_{\frac{n}{2} } \vee \overline{K_{ \frac{n}{2} }}=K_{\frac{n}{2} } \vee (K_0+\overline{K_{ \frac{n}{2} }})=G_{\frac{n}{2}}$ with even $n$. Hence we only need to compare  the number of edges of $G_s$ with $\delta \leq s \leq \lfloor \frac{n}{2} \rfloor$.
		
		%Next, we continue to prove Theorem 1.
		By Lemma \ref{lem8}, if $G$ is not $k$-$d$-critical,  one of the following holds depending on $n$:
		
		If $n>6\delta+2$ or $n=6\delta+1$, then $G=G_\delta$ or $e(G)< e(G_\delta)$.
		
		If $n=6\delta+2$ or $n=6\delta-1$, then $G=G_\delta$, $e(G)< e(G_\delta)$ or $G =G_{\lfloor \frac{n}{2} \rfloor} $.
		
		If $n<6\delta-1$ or $n=6\delta$, then $G =G_{\lfloor \frac{n}{2} \rfloor} $ or $e(G)< e(G_{\lfloor \frac{n}{2} \rfloor})$.
		
        It leads to a contradiction with the conditions of Theorem \ref{thm1}.
		This completes the proof.
	\end{proof}

	\begin{theorem}\label{thm2}
		For a positive integer $\delta$, let $G$ be a graph of order $n\geq 8\delta+4$ with minimum degree $\delta(G) \geq \delta$. $k\geq 3$ odd and let $1\leq d < k$ with  $n\equiv d ~(\mathrm{mod}~2)$.	
		If  $\rho(G)\geq \rho(K_\delta \vee (K_{n-2\delta}+\overline{K_\delta}))$, then $G$ is $k$-$d$-critical unless $ G = K_\delta \vee (K_{n-2\delta}+\overline{K_\delta})$.
	\end{theorem}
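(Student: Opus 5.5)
We argue by contradiction, following the structure of the proof of Theorem \ref{thm1} but with spectral radius in place of size. Suppose $G$ is not $k$-$d$-critical and satisfies $\rho(G)\geq \rho(K_\delta \vee (K_{n-2\delta}+\overline{K_\delta}))$. By Lemma \ref{lem2} there is a nonempty $S\subseteq V(G)$ with $|S|=s$ such that
\[
\mbox{odd}(G-S)+k\cdot i(G-S)\geq k|S|-d+1 .
\]
Among all graphs on the same vertex set having $\delta(G)\geq\delta$ and satisfying this inequality for this $S$, we take one maximal under adding edges. Since $G$ is connected, adding an edge strictly increases $\rho$ by Lemma \ref{lem6}, so it suffices to bound $\rho$ of the extremal graph. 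Each addition keeps the inequality and does not lower the minimum degree. Exactly as in Theorem \ref{thm1}, the extremal graph is $K_s\vee(K_{n_1}+\cdots+K_{n_p})$.

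\textbf{Step 1: reduce to $G_s$.} We want $\mbox{odd}(G-S)\le 1$. If $\mbox{odd}(G-S)>1$, first absorb the even components into an odd one. Then the sizes satisfy $n_1<n-s-p+1$, and Lemma \ref{lem5} gives the strict increase
\[
\rho\bigl(K_s\vee(K_{n_1}+\cdots)\bigr)<\rho\bigl(K_s\vee(K_{n-s-p+1}+(p-1)K_1)\bigr).
\]
The new graph still satisfies the inequality, since it has $p-1$ isolated vertices and at most one nontrivial odd component. Its minimum degree is $\geq s$. Indeed, $s\geq\delta$ holds because every isolated vertex of $G-S$ has all its neighbors in $S$; this needs $i\ge1$, which follows from $i\geq s$ as in Theorem \ref{thm1} whenever $d<k$. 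Next, the argument with $d<k$ forces $i\geq s$. Merging nontrivial components, and then joining surplus isolated vertices to the clique when $i>s$, increases $\rho$ (Lemma \ref{lem6}) and preserves the inequality. We end with either $G_s=K_s\vee(K_{n-2s}+\overline{K_s})$, where $\delta\le s\le\frac{n-2}{2}$, or $K_s\vee\overline{K_{n-s}}$. The latter is a subgraph of $K_{\lfloor n/2\rfloor}\vee\overline{K_{\lceil n/2\rceil}}$ when $s\le \lfloor n/2\rfloor$. For larger $s$ the inequality forces $i\ge s$, so $n-s\ge s$ and this case does not occur. As in Theorem \ref{thm1}, $K_{\lfloor n/2\rfloor}\vee\overline{K_{\lceil n/2\rceil}}=G_{\lfloor n/2\rfloor}$.

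\textbf{Step 2: compare and handle equality.} All cases now lie in the family $G_s$ with $\delta\le s\le\lfloor n/2\rfloor$, and we have $\rho(G)\le \rho(G_s)$. Equality holds only if $G$ was already $G_s$, because every edge addition or Lemma \ref{lem5} move gives a strict increase. Since $n\ge 8\delta+4$, Lemma \ref{lem9} gives $\rho(G_s)<\rho(G_\delta)$ for $s>\delta$. Therefore $\rho(G)<\rho(G_\delta)$ unless $G=G_\delta$, contradicting the hypothesis. Finally, $G_\delta$ really is not $k$-$d$-critical: taking $S=V(K_\delta)$ gives $k\delta\ge k\delta-d+1$.

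\textbf{Main obstacle.} The main obstacle is the bookkeeping needed to keep strictness through each transformation, and to check that every operation preserves $\delta(G)\ge\delta$ and the violating inequality. Lemma \ref{lem9} carries the analytic weight, so the care goes into this bookkeeping rather than any new estimate.
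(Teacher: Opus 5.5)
\textbf{Gap: the case $i(G-S)=0$.} Your route is the paper's route: edge-maximal $K_s\vee(K_{n_1}+\cdots+K_{n_p})$, Lemma \ref{lem5} to force $\mbox{odd}(G-S)\le 1$, $i\ge s$ from $d<k$, reduction to $G_s$, then Lemma \ref{lem9}. The rest of the chain is fine, but the step $s\ge\delta$, which Lemma \ref{lem9} needs, is not justified.

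You get $i\ge 1$ from ``$i\ge s$ as in Theorem \ref{thm1}''. That argument ($ks-d+1\le 1+k(s-1)$) assumes $\mbox{odd}(G-S)\le 1$, yet you use it inside the branch $\mbox{odd}(G-S)>1$. In that branch $i(G-S)=0$ is possible, because nontrivial odd components alone can satisfy the inequality, and then nothing forces $s\ge\delta$.

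Concretely, take $k=3$, $d=2$, $\delta=3$, $n\ge 28$ even, and $G=K_1\vee(K_{n-7}+2K_3)$. Then $\delta(G)=3$, and with $S$ the cone vertex we get $\mbox{odd}(G-S)=3\ge 3\cdot 1-2+1$, $i=0$, $s=1$. Your chain sends $G$ to $K_1\vee(K_{n-3}+2K_1)$ and then to $G_1=K_1\vee(K_{n-2}+K_1)\supseteq K_{n-1}$. Its spectral radius exceeds $n-2>\rho(G_\delta)$. Lemma \ref{lem9} says nothing when $s<\delta$, so no contradiction is reached, although $\rho(G)<\rho(G_\delta)$ does hold for this $G$.

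\textbf{The paper has the same gap.} It asserts ``since $\delta(G)\ge\delta$, $s\ge\delta$'' right after a Lemma \ref{lem5} move that creates vertices of degree $s$. So an extra case is genuinely needed.

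\textbf{Sketch of a repair.} If $i(G-S)\ge 1$ in the original graph, an isolated vertex of $G-S$ has all its at least $\delta$ neighbours in $S$. Hence $s\ge\delta$, and since $S$ never changes your chain works verbatim.

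If $i(G-S)=0$ and $s<\delta$, every component of $G-S$ has order at least $\delta-s+1$. There are $t+1\ge ks-d+1\ge k(s-1)+2$ components. A spectral analogue of Lemma \ref{lem4} (component orders $\ge \delta-s+1$, available in the literature) bounds $\rho(G)$ by $\rho\bigl(K_s\vee(K_N+tK_{\delta-s+1})\bigr)$, where $N=n-s-t(\delta-s+1)$.
\begin{itemize}
\item For $s\ge 2$ we have $t(\delta-s+1)\ge (3s-2)(\delta-s+1)\ge 2\delta$. Evaluating the characteristic polynomial of the equitable $3\times 3$ quotient matrix at $n-\delta-1$, and noting it increases beyond that point, gives $\rho<n-\delta-1<\rho(G_\delta)$.
\item For $s=1$, merge all but one small clique into the big one; this gives the bound $\rho\bigl(K_1\vee(K_{n-\delta-1}+K_\delta)\bigr)$. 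Delete the $\binom{\delta}{2}$ edges of $K_\delta$ and join $K_\delta$ to $\delta-1$ further vertices of the big clique; the result is $G_\delta$. The Perron entries on $K_\delta$ are smaller than those on the big clique, so the Rayleigh quotient strictly increases. This gives $\rho(G)<\rho(G_\delta)$ in this case as well.
\end{itemize}
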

	
	\begin{proof}
		By contradiction, assume that $G$ is not $k$-$d$-critical. Choose a graph $G$ whose spectral radius is as large as possible.
		By Lemma \ref{lem2}, there exists $\varnothing \neq S \subseteq V(G)$ such that \begin{equation}\label{2}
			\mbox{odd}(G-S)+k\cdot i(G-S)\geq k|S|-d+1. \tag{2}
		\end{equation}
		
		Let $|S|=s$ and $i(G-S)=i$. Suppose that components of $G-S$ are $G_{1}$, $G_{2}, \ldots, G_{p}$ and $|V(G_1)|=n_1, |V(G_2)|=n_2,\ldots, |V(G_p)|=n_p$ with $n_1 \geq n_2 \geq \cdots \geq n_p \geq 1 $. By Lemma \ref{lem6}, $G=K_s \vee (K_{n_1}+K_{n_2}+\cdots+K_{n_p})$. If $\mbox{odd}(G-S)>1$, then $G-S$ contains no even component. Otherwise, we can add edges between one odd component and all the even components of $G-S$. The resulting graph still satisfies (\ref{2}) and  with larger spectral radius, a contradiction. $\mbox{odd}(G-S)>1$ means $n_1<n-s-p+1$. By Lemma \ref{lem5}, $\rho(K_s \vee (K_{n_1}+K_{n_2}+\cdots+K_{n_p})) < \rho(K_s \vee (K_{n-s-p+1}+(p-1)K_1))$.
		$K_s \vee (K_{n-s-p+1}+(p-1)K_1)$ still satisfies (\ref{2}) and has larger spectral radius, thus it contradicts the assumption. Therefore $\mbox{odd}(G-S)\leq 1$ and  $G=K_s \vee (K_{n-s-i}+\overline{K_i})$ or $G=K_s \vee \overline{K_{n-s}}$ for $i=n-s$.
		
		If $i\leq s-1$, by (\ref{2}), we have $ks-d+1\leq 1+ki\leq 1+k(s-1)$. Then $d\geq k$, a contradiction. Thus, $i\geq s$.
		By Lemma \ref{lem6}, $\rho(K_s \vee (K_{n-s-i}+\overline{K_i}))\leq \rho(K_s \vee (K_{n-2s}+\overline{K_s}))$ and $\rho(K_s \vee \overline{K_{n-s}}) \leq \rho(K_{\lfloor \frac{n}{2} \rfloor} \vee \overline{K_{\lceil \frac{n}{2} \rceil}})$.
		
		Denote $G_s=K_s \vee (K_{n-2s}+\overline{K_s}))$, then $K_{\lfloor \frac{n}{2} \rfloor} \vee \overline{K_{\lceil \frac{n}{2} \rceil}}=G_{\lfloor \frac{n}{2} \rfloor}$. Since $\delta(G)\geq \delta$, $s\geq \delta$.
		Now we need to compare the spectral radii of $G_s$ for $\delta \leq s \leq\lfloor \frac{n}{2} \rfloor$.
		
		By Lemma \ref{lem9}, if $G$ is not $k$-$d$-critical, either $G=K_\delta \vee (K_{n-2\delta}+\overline{K_\delta})$ or $\rho(G)< \rho(K_\delta \vee (K_{n-2\delta}+\overline{K_\delta}))$.  It leads to a contradiction with the conditions of Theorem \ref{thm2}. This completes the proof.
	\end{proof}
	By Lemma \ref{lem3}, we have the following corollaries.
	
	\begin{corollary}
		For  positive integer $\delta$ and odd $k\geq 3$, let $G$ be a graph of odd  order $n\geq 3$ with minimum degree $\delta(G) \geq \delta$.
		
		$\mathrm{(i)}$~If $n>6\delta-1$ and $e(G)\geq e(K_\delta \vee (K_{n-2\delta}+\overline{K_\delta}))$, then $G$ is $\text{GFC}_k$ unless $ G = K_\delta \vee (K_{n-2\delta}+\overline{K_\delta})$.
		
		$\mathrm{(ii)}$~If $n=6\delta-1$ and $e(G)\geq e(K_\delta \vee (K_{n-2\delta}+\overline{K_\delta}))$, then $G$ is $\text{GFC}_k$ unless $ G = K_\delta \vee (K_{n-2\delta}+\overline{K_\delta})$ or $G =K_{\frac{n-1}{2} } \vee \overline{K_{ \frac{n+1}{2} }}$.
		
		$\mathrm{(iii)}$~If $n<6\delta-1$ and $e(G)\geq e(K_{ \frac{n-1}{2} } \vee \overline{K_{  \frac{n+1}{2} }})$, then $G$ is $\text{GFC}_k$ unless $ G = K_{ \frac{n-1}{2} } \vee \overline{K_{ \frac{n+1}{2} }}$.
	\end{corollary}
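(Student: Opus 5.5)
The corollary follows from Theorem \ref{thm1} with $d=1$ together with Lemma \ref{lem3}. Fix odd $k\geq 3$ and let $n$ be odd. Then $d=1$ satisfies $1\leq d<k$ and $n\equiv d \pmod 2$, so Theorem \ref{thm1} applies with $d=1$. By Lemma \ref{lem3}, every $k$-$1$-critical graph is $\text{GFC}_k$, since $d=1$ is odd. So any conclusion of Theorem \ref{thm1} of the form ``$G$ is $k$-$1$-critical'' upgrades to ``$G$ is $\text{GFC}_k$'', with the same list of exceptional graphs.

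The remaining work is to translate the ranges of $n$ in Theorem \ref{thm1} into the ranges stated for odd $n$.

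\textbf{Range (i), $n>6\delta-1$.} For odd $n$, the values $6\delta$ and $6\delta+2$ are even, so they cannot occur. Hence $n>6\delta-1$ means $n=6\delta+1$ or $n\geq 6\delta+3>6\delta+2$, which is exactly the hypothesis of Theorem \ref{thm1}(i).

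\textbf{Range (ii), $n=6\delta-1$.} This is the odd case of Theorem \ref{thm1}(ii). Here $\lfloor n/2\rfloor=\frac{n-1}{2}$ and $\lceil n/2\rceil=\frac{n+1}{2}$, so the second exceptional graph is $K_{\frac{n-1}{2}}\vee\overline{K_{\frac{n+1}{2}}}$.

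\textbf{Range (iii), $n<6\delta-1$.} This matches the odd part of Theorem \ref{thm1}(iii), and the extremal graph is rewritten in the same way using $\lfloor n/2\rfloor=\frac{n-1}{2}$ and $\lceil n/2\rceil=\frac{n+1}{2}$.

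The exceptional graphs really are exceptions. They arise in the proof of Theorem \ref{thm1} from a set $S$ violating Lemma \ref{lem2} with $d=1$, meaning $\mbox{odd}(G-S)+k\cdot i(G-S)\geq k|S|$. This is exactly the violation of the inequality in Lemma \ref{lem1}(2). Strictly speaking this is only needed for sharpness, not for the implication itself. The only point needing care is checking that the parity cases line up; there is no real obstacle.
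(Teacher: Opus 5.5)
Your proposal is correct and follows the paper's own argument: the paper obtains this corollary by applying Theorem \ref{thm1} with an odd $d$ (such as $d=1$) and then using Lemma \ref{lem3} to upgrade ``$k$-$d$-critical'' to $\text{GFC}_k$. Your parity bookkeeping is exactly the translation the paper leaves implicit: for odd $n$ the cases $n=6\delta$ and $n=6\delta+2$ cannot occur, so $n>6\delta-1$ means $n=6\delta+1$ or $n\geq 6\delta+3$, and $\lfloor n/2\rfloor=\frac{n-1}{2}$.
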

	
	\begin{corollary}
		For  positive integer $\delta$ and odd $k\geq 3$, let $G$ be a graph of even  order $n\geq 4$ with minimum degree $\delta(G) \geq \delta$.
		
		$\mathrm{(i)}$~If $n>6\delta+2$  and $e(G)\geq e(K_\delta \vee (K_{n-2\delta}+\overline{K_\delta}))$, then $G$ is $\text{GBC}_k$ unless $ G = K_\delta \vee (K_{n-2\delta}+\overline{K_\delta})$.
		
		$\mathrm{(ii)}$~If $n=6\delta+2$  and $e(G)\geq e(K_\delta \vee (K_{n-2\delta}+\overline{K_\delta}))$, then $G$ is $\text{GBC}_k$ unless $ G = K_\delta \vee (K_{n-2\delta}+\overline{K_\delta})$ or $G =K_{ \frac{n}{2}} \vee \overline{K_{ \frac{n}{2} }} $.
		
		$\mathrm{(iii)}$~If $n<6\delta+2$  and $e(G)\geq e(K_{ \frac{n}{2}} \vee \overline{K_{  \frac{n}{2} }})$, then $G$ is $\text{GBC}_k$ unless $ G = K_{ \frac{n}{2} } \vee \overline{K_{ \frac{n}{2} }}$.
	\end{corollary}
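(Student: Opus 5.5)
This is the GBC$_k$ analogue of the preceding corollary for odd order, and I will derive it from Theorem \ref{thm1} via Lemma \ref{lem3}. For even $n$ I take $d=2$. Since $k\geq 3$ is odd, we have $1\leq d<k$ and $n\equiv d \pmod 2$. So Theorem \ref{thm1} applies with $d=2$, and Lemma \ref{lem3} says that every $k$-$2$-critical graph is $\text{GBC}_k$. Hence in each case it suffices to show that $G$ is $k$-$2$-critical, apart from the listed extremal graphs. The order restriction $n\geq 3$ in Theorem \ref{thm1} is covered by $n\geq 4$.

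The next step is to translate the case split of Theorem \ref{thm1} into the even-order setting. Theorem \ref{thm1} splits according to $n>6\delta+2$ or $n=6\delta+1$; $n=6\delta+2$ or $n=6\delta-1$; and $n<6\delta-1$ or $n=6\delta$. For even $n$, the values $6\delta\pm1$ and $6\delta+1$ cannot occur. The first condition therefore becomes $n>6\delta+2$ and the second becomes $n=6\delta+2$.

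For the third case, I need to check that every even $n<6\delta+2$ is covered by Theorem \ref{thm1}(iii). The even integers below $6\delta+2$ are exactly those with $n\leq 6\delta$. Any even $n<6\delta$ satisfies $n\leq 6\delta-2<6\delta-1$, and the remaining value is $n=6\delta$, which is listed explicitly. So part (iii) of Theorem \ref{thm1} applies to all even $n<6\delta+2$.

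Finally, for even $n$ we have $K_{\lfloor n/2\rfloor}\vee\overline{K_{\lceil n/2\rceil}}=K_{n/2}\vee\overline{K_{n/2}}$, which gives exactly the exceptional graphs in (ii) and (iii). There is no real obstacle here. The only care needed is checking that the parity reduction of the ranges in Theorem \ref{thm1} matches the ranges stated in the corollary, and I have done that above.
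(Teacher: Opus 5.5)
Your proposal is correct and follows the paper's route. The paper obtains this corollary directly from Theorem~\ref{thm1} via Lemma~\ref{lem3}, and your explicit choice $d=2$, the parity reduction of the three ranges of $n$, and the identity $K_{\lfloor n/2\rfloor}\vee\overline{K_{\lceil n/2\rceil}}=K_{n/2}\vee\overline{K_{n/2}}$ for even $n$ are exactly the bookkeeping the paper leaves implicit.
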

	
	\begin{corollary}
		For  positive integer $\delta$ and odd $k\geq 3$, let $G$ be a graph of odd order $n\geq 8\delta+4$ with minimum degree $\delta(G) \geq \delta$.
		If  $\rho(G)\geq \rho(K_\delta \vee (K_{n-2\delta}+\overline{K_\delta}))$, then $G$ is $\text{GFC}_k$ unless $ G = K_\delta \vee (K_{n-2\delta}+\overline{K_\delta})$.
	\end{corollary}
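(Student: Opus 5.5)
This corollary follows from Theorem \ref{thm2} combined with Lemma \ref{lem3}, once we choose a suitable $d$. The order $n$ is odd, so we take $d=1$. This satisfies $1\le d<k$, since $k\ge 3$, and it satisfies $n\equiv d \pmod 2$. The remaining hypotheses of Theorem \ref{thm2} are exactly those of the corollary: $n\ge 8\delta+4$, $\delta(G)\ge\delta$, and $\rho(G)\ge \rho(K_\delta \vee (K_{n-2\delta}+\overline{K_\delta}))$.

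Applying Theorem \ref{thm2} with $d=1$, we get that $G$ is $k$-$1$-critical unless $G=K_\delta \vee (K_{n-2\delta}+\overline{K_\delta})$. Lemma \ref{lem3} then gives, for $d=1$ odd, that every $k$-$1$-critical graph is $\text{GFC}_k$. Alternatively, the first lemma of the paper, due to Liu et al., shows directly that $k$-$1$-critical is equivalent to $\text{GFC}_k$ when $n$ is odd and $k\ge3$ is odd. Hence $G$ is $\text{GFC}_k$ in every case except the stated exceptional graph.

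It is worth checking that the exception is a genuine one, so that the bound is sharp. Take $S=V(K_\delta)$. Then $G-S$ has $i(G-S)=\delta$ isolated vertices, and we compare against the criterion of Lemma \ref{lem1}(2):
\[
\mbox{odd}(G-S)+k\,i(G-S)\ \ge\ k\delta\ >\ k|S|-1 .
\]
This violates Lemma \ref{lem1}(2), so the extremal graph is not $\text{GFC}_k$.

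None of these steps is a real obstacle. The only point needing care is that the parity condition $n\equiv d \pmod 2$ forces $d=1$ to be an admissible choice, and this holds because $n$ is odd.
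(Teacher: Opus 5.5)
Your proof is correct and follows the paper's own route: the paper gets this corollary by applying Theorem \ref{thm2} with an admissible odd $d$ ($d=1$ works since $n$ is odd and $k\ge 3$), then Lemma \ref{lem3} to pass from $k$-$d$-critical to $\text{GFC}_k$. Your sharpness check via Lemma \ref{lem1}(2) with $S=V(K_\delta)$ is correct but not needed for the implication as stated.
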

	
	\begin{corollary}
		For  positive integer $\delta$ and odd $k\geq 3$, let $G$ be a graph of even order $n\geq 8\delta+4$ with minimum degree $\delta(G) \geq \delta$.
		If  $\rho(G)\geq \rho(K_\delta \vee (K_{n-2\delta}+\overline{K_\delta}))$, then $G$ is $\text{GBC}_k$ unless $ G = K_\delta \vee (K_{n-2\delta}+\overline{K_\delta})$.
	\end{corollary}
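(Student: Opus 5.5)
The last corollary (even order $n\geq 8\delta+4$, $\rho(G)\geq \rho(K_\delta \vee (K_{n-2\delta}+\overline{K_\delta}))$, conclusion $\text{GBC}_k$) should follow from Theorem \ref{thm2} together with Lemma \ref{lem3} by choosing $d$ correctly. Since $n$ is even, the parity condition $n\equiv d~(\mathrm{mod}~2)$ of Theorem \ref{thm2} forces $d$ to be even. Since $k\geq 3$ is odd, the value $d=2$ satisfies $1\leq d<k$, so we fix $k$ odd, $d=2$ and the given $\delta$.

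First, apply Theorem \ref{thm2}. Its hypotheses hold:
\begin{itemize}
\item $n\geq 8\delta+4$;
\item $\delta(G)\geq \delta$;
\item $k\geq 3$ is odd;
\item $1\leq 2<k$;
\item $n\equiv 2~(\mathrm{mod}~2)$;
\item the spectral condition $\rho(G)\geq \rho(K_\delta \vee (K_{n-2\delta}+\overline{K_\delta}))$.
\end{itemize}
Hence $G$ is $k$-$2$-critical unless $G=K_\delta \vee (K_{n-2\delta}+\overline{K_\delta})$.

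Second, in the non-exceptional case, invoke Lemma \ref{lem3} with $d=2$. Its hypotheses are $n\geq 3$, which holds since $n\geq 12$, $k\geq 3$ odd, $1\leq d\leq k$ and $n\equiv d~(\mathrm{mod}~2)$. Because $d=2$ is even, the lemma says a $k$-$2$-critical graph is $\text{GBC}_k$. This gives the stated conclusion with exactly the stated exception.

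There is no real obstacle; the only step needing care is the choice of an admissible even $d$ strictly below $k$. That choice is why the argument uses $d=2$, which is possible only because $k\geq 3$. If one wanted to confirm the exception is genuine, note that in $K_\delta \vee (K_{n-2\delta}+\overline{K_\delta})$ the set $S=V(K_\delta)$ gives $\text{odd}(G-S)+k\cdot i(G-S)\geq k\delta>k|S|-2$. By Lemma \ref{lem1}(3), this graph is therefore not $\text{GBC}_k$. This check is not required for the implication itself.
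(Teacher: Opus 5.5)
Your proof is correct and follows the paper's route: the paper derives this corollary from Theorem \ref{thm2} combined with Lemma \ref{lem3}. Your explicit choice $d=2$, admissible because $n$ is even and $k\geq 3$, is exactly the instantiation the paper leaves implicit.
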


	\section{Size and spectral radius conditions for a graph to be $\text{GFC}_k$ or $\text{GBC}_k$ with even $k$ involving minimum degree.}
	
	For a graph $G$ of order $n$ satisfying $\delta(G)\geq \delta$, we establish  tight sufficient conditions for $G$ to be $\text{GFC}_k$ or $\text{GBC}_k$, where $k$ is an even integer, in terms of both size and spectral radius.
	
	\begin{theorem}\label{thm3}
		For  positive integer $\delta$ and even $k$, let $G$ be a graph of odd  order $n\geq 3$ with minimum degree $\delta(G) \geq \delta$.
		
		$\mathrm{(i)}$~If $n>6\delta-1$ and $e(G)\geq e(K_\delta \vee (K_{n-2\delta}+\overline{K_\delta}))$, then $G$ is $\text{GFC}_k$ unless $ G = K_\delta \vee (K_{n-2\delta}+\overline{K_\delta})$.
		
		$\mathrm{(ii)}$~If $n=6\delta-1$ and $e(G)\geq e(K_\delta \vee (K_{n-2\delta}+\overline{K_\delta}))$, then $G$ is $\text{GFC}_k$ unless $ G = K_\delta \vee (K_{n-2\delta}+\overline{K_\delta})$ or $G =K_{\frac{n-1}{2} } \vee \overline{K_{ \frac{n+1}{2} }} $.
		
		$\mathrm{(iii)}$~If $n<6\delta-1$ and $e(G)\geq e(K_{ \frac{n-1}{2} } \vee \overline{K_{  \frac{n+1}{2} }})$, then $G$ is $\text{GFC}_k$ unless $ G = K_{ \frac{n-1}{2} } \vee \overline{K_{ \frac{n+1}{2} }}$.
	\end{theorem}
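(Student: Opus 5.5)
We argue by contradiction, in the same way as Theorem \ref{thm1}, but use part (1) of Lemma \ref{lem1} for even $k$. Suppose $G$ has odd order $n$, satisfies $\delta(G)\geq\delta$ and the relevant size condition, and is not $\text{GFC}_k$. Among all such counterexamples, take $G$ with $e(G)$ maximum. Lemma \ref{lem1}(1) then gives some $\varnothing\neq S\subseteq V(G)$ with
\[
i(G-S)\geq |S|.
\]
Put $s=|S|$ and $i=i(G-S)$. This condition involves only isolated vertices, so we may add edges freely as long as we never join an isolated vertex of $G-S$ to anything outside $S$. Adding edges preserves $\delta(G)\geq\delta$, and by maximality no edge can be added. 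Hence $S$ is a clique, $S$ is joined to every vertex of $G-S$, and all nontrivial components of $G-S$ merge into one clique. This gives $G=K_s\vee(K_{n-s-i}+\overline{K_i})$ with $i\geq s$, where we allow $n-s-i=0$.

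Next we reduce to $i=s$. Suppose $i>s$. If $n-s-i\geq 1$, join one isolated vertex to the whole clique $K_{n-s-i}$. The new graph has $i-1\geq s$ isolated vertices in $G-S$ and more edges, contradicting maximality. (If $n-s-i=1$ this vertex and the singleton form a nontrivial component, and the count is still fine.) If $n-s-i=0$, we get $G=K_s\vee\overline{K_{n-s}}$ with $n-s>s$, so $s\leq\frac{n-1}{2}$. Making one isolated vertex adjacent to another isolated vertex keeps $n-s-2\geq s$ only when $n-s\geq s+2$. So in this subcase we move vertices into $S$ instead: $K_s\vee\overline{K_{n-s}}$ is a spanning subgraph of $K_{\frac{n-1}{2}}\vee\overline{K_{\frac{n+1}{2}}}$. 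That graph is again a counterexample, with $S'$ of size $\frac{n-1}{2}$ and $\frac{n+1}{2}$ isolated vertices. By maximality, $G=K_{\frac{n-1}{2}}\vee\overline{K_{\frac{n+1}{2}}}=G_{\frac{n-1}{2}}$. Since $n$ is odd, the case $n-2s=0$ cannot occur with $i=s$. So every extremal candidate is $G_s=K_s\vee(K_{n-2s}+\overline{K_s})$ with $s\leq\frac{n-1}{2}$. Every vertex of $\overline{K_s}$ has degree $s$, so $\delta(G)\geq\delta$ forces $s\geq\delta$.

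We now compare sizes using Lemma \ref{lem8}, restricted to odd $n$. Here $\lfloor n/2\rfloor=\frac{n-1}{2}$, and the thresholds specialize as follows: $n>6\delta+2$ or $n=6\delta+1$ becomes $n>6\delta-1$; the equality case becomes $n=6\delta-1$ only; and $n<6\delta-1$ stays as it is. Since $6\delta$ and $6\delta+2$ are even, they do not occur. For $n>6\delta-1$, every $G_s$ with $s>\delta$ has fewer edges than $G_\delta$. So a counterexample either equals $G_\delta$ or has $e(G)<e(G_\delta)$, which contradicts (i). Cases (ii) and (iii) go the same way using parts (ii) and (iii) of Lemma \ref{lem8}; there the possible exceptional graphs are $G_\delta$ and/or $K_{\frac{n-1}{2}}\vee\overline{K_{\frac{n+1}{2}}}$. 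A non-maximal counterexample has strictly fewer edges than the maximal one, so equality in the size hypothesis forces $G$ to be one of the listed graphs.

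The main obstacle is the parity bookkeeping when $n-s-i$ is small, that is, making sure the edge-adding moves keep the certificate $i\geq s$. It is also necessary to confirm that the two exceptional graphs are really not $\text{GFC}_k$. Take $S=V(K_\delta)$ in $G_\delta$; this gives $i(G-S)=\delta=|S|$. Take $S=V(K_{\frac{n-1}{2}})$ in the second graph; this gives $i=\frac{n+1}{2}>|S|$. Both violate Lemma \ref{lem1}(1), so the bounds are sharp.
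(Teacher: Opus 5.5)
Your proposal is correct and follows essentially the same route as the paper. Both take an edge-maximal counterexample with a certificate $S$ from Lemma~\ref{lem1}(1), saturate it to $K_s\vee(K_{n-s-i}+\overline{K_i})$, reduce to $i=s$ or to $K_{\frac{n-1}{2}}\vee\overline{K_{\frac{n+1}{2}}}$, and finish with Lemma~\ref{lem8} specialized to odd $n$. Your embedding argument for the subcase $n-s-i=0$ justifies a step the paper only asserts. Your parenthetical case $n-s-i=1$ is vacuous, since a lone leftover vertex would itself be isolated in $G-S$.
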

	
	\begin{proof}
		By contradiction, assume that $G$ is not $\text{GFC}_k$. Choose a graph $G$ whose size is as large as possible.
		By Lemma \ref{lem1}(1), there exists $\varnothing \neq S \subseteq V(G)$ such that \begin{equation}\label{3}
			i(G-S)\geq |S|. \tag{3}
		\end{equation}
		
		Let $|S|=s$ and $i(G-S)=i$.
		Since $G$ has the maximum number of edges, there is at most one nontrivial connected component in $G-S$ and all possible edges are present between $S$ and the vertices of $G-S$. Moreover, the possible connected component of $G-S$ and $S$ itself are cliques.
		Then $G=K_s \vee (K_{n-s-i}+\overline{K_i})$ if there exists one nontrivial component in $G-S$ or $G=K_s \vee \overline{K_{n-s}}$ for $i=n-s$ if there is no nontrivial component in $G-S$.
		When $G=K_s \vee (K_{n-s-i}+\overline{K_i})$, if $i>s$, we can join an isolated vertex in $G-S$ to all vertices in $V(K_{n-s-i})$. The resulting graph still satisfies (\ref{3}) and has more edges, a contradiction. Hence $i=s$, and $G=K_s \vee (K_{n-2s}+\overline{K_s})$. Since $\delta(G)\geq \delta$ and $K_{n-2s}$ is a nontrivial component, $\delta \leq s \leq \frac{n-3}{2}$.
		When $G=K_s \vee \overline{K_{n-s}}$ for $i=n-s$, since $G$ has the maximum number of edges, $ G = K_{ \frac{n-1}{2} } \vee \overline{K_{ \frac{n+1}{2} }}$.
		
		By Lemma \ref{lem8}, if $G$ is not $k$-$d$-critical,  one of the following holds depending on $n$:
		
		If $n>6\delta-1$, then $G=K_\delta \vee (K_{n-2\delta}+\overline{K_\delta})$ or $e(G)< e(K_\delta \vee (K_{n-2\delta}+\overline{K_\delta}))$.
		
		If $n=6\delta-1$, then $G=K_\delta \vee (K_{n-2\delta}+\overline{K_\delta})$, $e(G)< e(K_\delta \vee (K_{n-2\delta}+\overline{K_\delta}))$ or $G =K_{\frac{n-1}{2} } \vee \overline{K_{ \frac{n+1}{2} }} $.
		
		If $n<6\delta-1$, then $G =K_{\frac{n-1}{2} } \vee \overline{K_{ \frac{n+1}{2} }} $ or $e(G)< e(K_{\frac{n-1}{2} } \vee \overline{K_{ \frac{n+1}{2} }})$.
		
		It leads to a contradiction with the conditions of Theorem \ref{thm3}. This completes the proof.
	\end{proof}

	\begin{theorem}\label{thm4}
		For  positive integer $\delta$ and even $k$, let $G$ be a graph of even  order $n\geq 4$ with minimum degree $\delta(G) \geq \delta$.
		
		$\mathrm{(i)}$~If $n>6\delta+2$  and $e(G)\geq e(K_\delta \vee (K_{n-2\delta}+\overline{K_\delta}))$, then $G$ is $\text{GBC}_k$ unless $ G = K_\delta \vee (K_{n-2\delta}+\overline{K_\delta})$.
		
		$\mathrm{(ii)}$~If $n=6\delta+2$  and $e(G)\geq e(K_\delta \vee (K_{n-2\delta}+\overline{K_\delta}))$, then $G$ is $\text{GBC}_k$ unless $ G = K_\delta \vee (K_{n-2\delta}+\overline{K_\delta})$ or $G =K_{ \frac{n}{2}} \vee \overline{K_{ \frac{n}{2} }} $.
		
		$\mathrm{(iii)}$~If $n<6\delta+2$  and $e(G)\geq e(K_{ \frac{n}{2}} \vee \overline{K_{  \frac{n}{2} }})$, then $G$ is $\text{GBC}_k$ unless $ G = K_{ \frac{n}{2} } \vee \overline{K_{ \frac{n}{2} }}$.
	\end{theorem}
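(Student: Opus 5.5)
We argue by contradiction, mirroring the proof of Theorem \ref{thm3}. Suppose $G$ is not $\text{GBC}_k$ and, among such graphs of order $n$ with $\delta(G)\geq\delta$ satisfying the size hypothesis, take $G$ with $e(G)$ as large as possible. Since $n$ is even, Lemma \ref{lem1}(1) gives a nonempty $S\subseteq V(G)$ with
\[
i(G-S)\geq |S|.
\]
Write $|S|=s$ and $i(G-S)=i$. This condition involves only isolated vertices, and adding edges inside $S$, between $S$ and $V(G)-S$, or among vertices of nontrivial components does not destroy it. Hence maximality forces $S$ to be a clique joined completely to $V(G)-S$, and all nontrivial components merge into a single clique. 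Thus
\[
G=K_s\vee(K_{n-s-i}+\overline{K_i}) \quad\text{(one nontrivial component) or}\quad G=K_s\vee\overline{K_{n-s}} .
\]

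\textbf{Case 1: one nontrivial component.} If $i>s$, joining one isolated vertex of $G-S$ to the clique $K_{n-s-i}$ keeps $i(G-S)\geq|S|$ and adds edges, contradicting maximality. Hence $i=s$ and $G=G_s=K_s\vee(K_{n-2s}+\overline{K_s})$. An isolated vertex of $G-S$ has degree $s$, so $\delta\leq s$. The component is nontrivial and $n$ is even, so $n-2s\geq 2$, that is, $s\leq \frac{n-2}{2}$.

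\textbf{Case 2: no nontrivial component.} Here $i=n-s\geq s$, so $s\leq \frac n2$. Among these graphs the size is maximized at $s=\frac n2$, giving $K_{\frac n2}\vee\overline{K_{\frac n2}}=G_{\frac n2}$ (with $K_0$ the null graph).

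In both cases $G$ is, or has no more edges than, some $G_s$ with $\delta\leq s\leq \frac n2=\lfloor \frac n2\rfloor$. We then apply Lemma \ref{lem8}, specialized to even $n$, using the parity of $n$ to sort the thresholds:
\begin{itemize}
\item For even $n$, the condition ``$n>6\delta+2$ or $n=6\delta+1$'' reduces to $n>6\delta+2$.
\item ``$n=6\delta+2$ or $n=6\delta-1$'' reduces to $n=6\delta+2$.
\item ``$n<6\delta-1$ or $n=6\delta$'' reduces to $n<6\delta+2$, since $6\delta-1$ and $6\delta+1$ are odd.
\end{itemize}
In the first regime, $e(G)<e(G_\delta)$ unless $G=G_\delta$. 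In the second, the same holds unless $G=G_\delta$ or $G=G_{\frac n2}$. In the third, $e(G)<e(G_{\frac n2})$ unless $G=G_{\frac n2}$. Each alternative contradicts the size hypothesis of (i), (ii) or (iii) respectively, or makes $G$ one of the listed exceptions.

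The main obstacle is bookkeeping rather than ideas. We must make sure the strict-inequality claims carry over when $G$ is not itself some $G_s$ but only has at most as many edges: the extremal choice makes $G$ equal to some $G_s$, and a non-extremal $G$ has strictly fewer edges than that $G_s$. We must also confirm that $G_\delta$ and $G_{\frac n2}$ are genuinely not $\text{GBC}_k$, taking $S$ to be the $K_\delta$, respectively the $K_{\frac n2}$, part. A minor point is the degenerate case $\delta>\frac{n-2}{2}$, where Case 1 is empty and only $G_{\frac n2}$ survives; this is consistent with (iii).
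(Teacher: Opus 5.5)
Your proposal is correct and follows essentially the same route as the paper. Like the paper, you use Lemma \ref{lem1}(1) to obtain $S$ with $i(G-S)\ge |S|$, saturate edges to reduce to $G_s=K_s\vee(K_{n-2s}+\overline{K_s})$ with $\delta\le s\le n/2$ (the case $s=n/2$ being $K_{n/2}\vee\overline{K_{n/2}}$), and then apply Lemma \ref{lem8} specialized to even $n$. Your parity check that reduces the thresholds of Lemma \ref{lem8} to $n>6\delta+2$, $n=6\delta+2$ and $n<6\delta+2$, and your note on turning the extremal choice into strict inequalities for a non-extremal $G$, spell out details the paper leaves implicit.
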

	
	\begin{proof}
		By contradiction, assume that $G$ is not $\text{GBC}_k$.  Choose a graph $G$ whose size is as large as possible.
		By Lemma \ref{lem1}(1), there exists $\varnothing \neq S \subseteq V(G)$ that  satisfies (\ref{3}).
		Let $|S|=s$ and $i(G-S)=i$.
		Since $G$ has the maximum number of edges, there is at most one nontrivial connected component in $G-S$ and all possible edges are present between $S$ and the vertices of $G-S$. Moreover, the possible connected component of $G-S$ and $S$ itself are cliques. Then $G=K_s \vee (K_{n-s-i}+\overline{K_i})$ if there exists one nontrivial component in $G-S$ or $G=K_s \vee \overline{K_{n-s}}$ for $i=n-s$ if there is no nontrivial component in $G-S$.
		When $G=K_s \vee (K_{n-s-i}+\overline{K_i})$, if $i>s$, we can join an isolated vertex in $G-S$ to all vertices in $V(K_{n-s-i})$. The resulting graph still satisfies (\ref{3}) and has more edges, a contradiction. Hence $i=s$, and $G=K_s \vee (K_{n-2s}+\overline{K_s})$. Since $\delta(G)\geq \delta$ and $K_{n-2s}$ is a nontrivial component, $\delta \leq s \leq \frac{n-2}{2}$.
		When $G=K_s \vee \overline{K_{n-s}}$ for $i=n-s$, since $G$ has the maximum number of edges, $ G = K_{ \frac{n}{2} } \vee \overline{K_{ \frac{n}{2} }}$.
		
		By Lemma \ref{lem8}, if $G$ is not $\text{GBC}_k$,  one of the following holds depending on $n$:
		
		If $n>6\delta+2$, then $G=K_\delta \vee (K_{n-2\delta}+\overline{K_\delta})$ or $e(G)< e(K_\delta \vee (K_{n-2\delta}+\overline{K_\delta}))$.
		
		If $n=6\delta+2$, then $G=K_\delta \vee (K_{n-2\delta}+\overline{K_\delta})$, $e(G)< e(K_\delta \vee (K_{n-2\delta}+\overline{K_\delta}))$ or $G =K_{\frac{n}{2} } \vee \overline{K_{ \frac{n}{2} }} $.
		
		If $n<6\delta+2$, then $G =K_{\frac{n}{2} } \vee \overline{K_{ \frac{n}{2} }} $ or $e(G)< e(K_{\frac{n}{2} } \vee \overline{K_{ \frac{n}{2} }})$.
		
		It leads to a contradiction with the conditions of Theorem \ref{thm4}. This completes the proof.
	\end{proof}

	\begin{theorem}\label{thm5}
		For  positive integer $\delta$ and even $k$, let $G$ be a graph of odd order $n\geq 8\delta+4$ with minimum degree $\delta(G) \geq \delta$.
		If  $\rho(G)\geq \rho(K_\delta \vee (K_{n-2\delta}+\overline{K_\delta}))$, then $G$ is $\text{GFC}_k$ unless $ G = K_\delta \vee (K_{n-2\delta}+\overline{K_\delta})$.
	\end{theorem}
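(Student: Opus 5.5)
We argue by contradiction, following the proofs of Theorems \ref{thm2} and \ref{thm3}. Suppose $G$ is not $\text{GFC}_k$ and satisfies the spectral hypothesis. Among all graphs of order $n$ with $\delta(G)\geq\delta$ that are not $\text{GFC}_k$, let $G$ have the largest spectral radius. By Lemma \ref{lem1}(1), since $n$ is odd, there is a nonempty $S\subseteq V(G)$ with $i(G-S)\geq |S|$. Write $s=|S|$ and $i=i(G-S)$. This condition involves only isolated vertices. So adding edges inside $S$, between $S$ and $V(G)-S$, or among the non-isolated vertices of $G-S$ keeps it valid and does not decrease $\delta(G)$. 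Such additions strictly increase $\rho$ by Lemma \ref{lem6}, since the resulting graph is connected and so its adjacency matrix is irreducible. Hence the extremal $G$ has one of two forms: $G=K_s\vee(K_{n-s-i}+\overline{K_i})$ with $n-s-i\geq 2$, or $G=K_s\vee\overline{K_{n-s}}$ with $i=n-s$.

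Next we normalize $i$. In the first form, if $i>s$, join one isolated vertex of $G-S$ to every vertex of $K_{n-s-i}$. The resulting graph still has at least $s$ isolated vertices in $G-S$, so it is still not $\text{GFC}_k$, and it is a proper supergraph of $G$. By Lemma \ref{lem6} its spectral radius is strictly larger, a contradiction. Hence $i=s$ and $G=G_s=K_s\vee(K_{n-2s}+\overline{K_s})$. Each isolated vertex of $G-S$ has degree exactly $s$, so $s\geq\delta$. Because $n$ is odd, the nontrivial component forces $s\leq \frac{n-3}{2}$.

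In the second form, $G=K_s\vee\overline{K_{n-s}}$ requires $n-s\geq s$, so $s\leq\frac{n-1}{2}$. Maximality of $\rho$, together with Lemma \ref{lem6} applied by moving vertices from the independent side into the clique, forces $s=\frac{n-1}{2}$. Then $G=K_{\frac{n-1}{2}}\vee\overline{K_{\frac{n+1}{2}}}=G_{\frac{n-1}{2}}$, as already observed in the proof of Theorem \ref{thm1}. Its minimum degree is $\frac{n-1}{2}\geq\delta$, since $n\geq 8\delta+4$.

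So in every case $G=G_s$ for some $\delta\leq s\leq\lfloor\frac n2\rfloor$. Since $n\geq 8\delta+4$, Lemma \ref{lem9} gives $\rho(G_s)<\rho(G_\delta)$ whenever $s>\delta$. Therefore either $G=K_\delta\vee(K_{n-2\delta}+\overline{K_\delta})$ or $\rho(G)<\rho(K_\delta\vee(K_{n-2\delta}+\overline{K_\delta}))$. The first is the stated exception and the second contradicts the hypothesis, which proves the theorem.

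The only delicate point is justifying that every edge-adding step preserves the non-$\text{GFC}_k$ witness inequality $i(G-S)\geq|S|$, and that the strict monotonicity in Lemma \ref{lem6} applies, which needs irreducibility. Both hold here because we only ever add edges, the witness condition involves only isolated vertices, and all intermediate graphs are connected. The heavy spectral comparison is already done in Lemma \ref{lem9}, whose Case 1 covers all $s<\frac n2$ and in particular the odd-$n$ value $s=\frac{n-1}{2}$.
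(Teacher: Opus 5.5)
Your proof is correct and follows essentially the same route as the paper. You choose an extremal non-$\text{GFC}_k$ graph, use Lemma \ref{lem6} to saturate it into $K_s\vee(K_{n-s-i}+\overline{K_i})$ or $K_s\vee\overline{K_{n-s}}$, normalize to $i=s$ (respectively $s=\frac{n-1}{2}$), and conclude with Lemma \ref{lem9}; your vertex-moving argument for $s=\frac{n-1}{2}$ and your remark that this value falls under Case 1 of Lemma \ref{lem9} only make explicit steps the paper leaves implicit.
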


	\begin{proof}
		By contradiction, assume that $G$ is not $\text{GFC}_k$. Choose a graph $G$ whose spectral radius is as large as possible.
		By Lemma \ref{lem1}(1), there exists $\varnothing \neq S \subseteq V(G)$ that  satisfies (\ref{3}).
		Let $|S|=s$ and $i(G-S)=i$.
		Since $G$ has the largest spectral radius, there is at most one nontrivial connected component in $G-S$ and all possible edges are present between $S$ and the vertices of $G-S$. Moreover, the possible connected component of $G-S$ and $S$ itself are cliques. Then $G=K_s \vee (K_{n-s-i}+\overline{K_i})$ if there exists one nontrivial component in $G-S$ or $G=K_s \vee \overline{K_{n-s}}$ for $i=n-s$ if there is no nontrivial component in $G-S$.
		When $G=K_s \vee (K_{n-s-i}+\overline{K_i})$, if $i>s$, we can join an isolated vertex in $G-S$ to all vertices in $V(K_{n-s-i})$. The resulting graph still satisfies (\ref{3}) and has larger spectral radius, a contradiction. Hence $i=s$, and $G=K_s \vee (K_{n-2s}+\overline{K_s})$. Since $\delta(G)\geq \delta$ and $K_{n-2s}$ is a nontrivial component, $\delta \leq s \leq \frac{n-3}{2}$.
		When $G=K_s \vee \overline{K_{n-s}}$ for $i=n-s$, since $G$ has the largest spectral radius, $ G = K_{ \frac{n-1}{2} } \vee \overline{K_{ \frac{n+1}{2}}}$.
		
		By Lemma \ref{lem9}, if $G$ is not $\text{GFC}_k$, either $G=K_\delta \vee (K_{n-2\delta}+\overline{K_\delta})$ or $\rho(G)< \rho(K_\delta \vee (K_{n-2\delta}+\overline{K_\delta}))$.
		It leads to a contradiction with the conditions of Theorem \ref{thm5}. This completes the proof.
	\end{proof}

	\begin{theorem}\label{thm6}
		For  positive integer $\delta$ and even $k$, let $G$ be a graph of even order $n\geq 8\delta+4$ with minimum degree $\delta(G) \geq \delta$.
		If  $\rho(G)\geq \rho(K_\delta \vee (K_{n-2\delta}+\overline{K_\delta}))$, then $G$ is $\text{GBC}_k$ unless $ G = K_\delta \vee (K_{n-2\delta}+\overline{K_\delta})$.
	\end{theorem}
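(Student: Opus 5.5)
We argue by contradiction, mirroring the proof of Theorem \ref{thm5}. Suppose $G$ is not $\text{GBC}_k$ and, among all such graphs of even order $n$ with $\delta(G)\geq\delta$, choose $G$ with $\rho(G)$ as large as possible. Since $n$ is even, Lemma \ref{lem1}(1) gives some $\varnothing\neq S\subseteq V(G)$ with $i(G-S)\geq |S|$, i.e.\ (\ref{3}) holds. Write $|S|=s$ and $i(G-S)=i$.

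\textbf{Step 1: reduce the extremal structure.} Condition (\ref{3}) depends only on $|S|$ and on the isolated vertices of $G-S$. So we may make $S$ a clique, join $S$ completely to $V(G-S)$, and merge all nontrivial components of $G-S$ into one clique. None of these operations changes $i(G-S)$ or $|S|$, and none decreases $\delta(G)$. If the graph changes, then by Lemma \ref{lem6} (the graphs are connected, so the adjacency matrices are irreducible) the spectral radius strictly increases, contradicting maximality. Hence $G=K_s\vee(K_{n-s-i}+\overline{K_i})$ with $n-s-i\geq 2$, or $G=K_s\vee\overline{K_{n-s}}$.

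In the first case, suppose $i>s$. Joining one isolated vertex of $G-S$ to the clique $K_{n-s-i}$ leaves $i(G-S)=i-1\geq s$, so (\ref{3}) still holds, and again Lemma \ref{lem6} gives a larger spectral radius. Thus $i=s$ and $G=G_s=K_s\vee(K_{n-2s}+\overline{K_s})$. An isolated vertex of $G-S$ has degree $s$, so $\delta\leq s\leq\frac{n-2}{2}$.

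In the second case, $i=n-s\geq s$ forces $s\leq n/2$. By monotonicity (Lemma \ref{lem6}: increasing $s$ adds edges), $\rho$ is maximized at $s=n/2$, giving $G=K_{n/2}\vee\overline{K_{n/2}}=G_{n/2}$, where $n/2\geq\delta$ follows from $n\geq 8\delta+4$.

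\textbf{Step 2: compare spectral radii.} Every candidate is $G_s$ with $\delta\leq s\leq n/2=\lfloor n/2\rfloor$. Since $n\geq 8\delta+4$, Lemma \ref{lem9} applies and gives $\rho(G_s)<\rho(G_\delta)$ for all $\delta<s\leq\lfloor n/2\rfloor$. So either $G=G_\delta=K_\delta\vee(K_{n-2\delta}+\overline{K_\delta})$ or $\rho(G)<\rho(G_\delta)$. Both contradict the hypothesis. Note that $G_\delta$ itself is indeed not $\text{GBC}_k$: taking $S=V(K_\delta)$ gives $i=\delta=|S|$, violating Lemma \ref{lem1}(1). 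This confirms it is a genuine exception.

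\textbf{Main obstacle.} The only subtle point is the Case 2 comparison $s=n/2$ (the graph $K_{n/2}\vee\overline{K_{n/2}}$) in the even-order setting. This is exactly Case 2 of Lemma \ref{lem9}, which is stated for $s=\frac{n}{2}$. So that comparison is already available, and the rest is a routine application of Lemma \ref{lem6}.
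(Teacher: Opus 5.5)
Your proof is correct and follows the paper's argument essentially step for step. You take a non-$\text{GBC}_k$ graph of maximum spectral radius, use Lemma~\ref{lem1}(1) and edge saturation (strict increase via Lemma~\ref{lem6}) to reduce to $G_s=K_s\vee(K_{n-2s}+\overline{K_s})$ with $\delta\le s\le n/2$, and finish with Lemma~\ref{lem9}; your explicit checks that strictness comes from connectivity, that $n/2\ge\delta$, and that $G_\delta$ is genuinely not $\text{GBC}_k$ are small additions the paper leaves implicit.
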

	
	\begin{proof}
		By contradiction, assume that $G$ is not $\text{GBC}_k$. Choose a graph $G$ whose spectral radius is as large as possible.
		By Lemma \ref{lem1}(1), there exists $\varnothing \neq S \subseteq V(G)$ that  satisfies (\ref{3}).
		Let $|S|=s$ and $i(G-S)=i$.
		Since $G$ has the largest spectral radius, there is at most one nontrivial connected component in $G-S$ and all possible edges are present between $S$ and the vertices of $G-S$. Moreover,  the possible connected component of $G-S$ and $S$ itself are cliques. Then $G=K_s \vee (K_{n-s-i}+\overline{K_i})$ if there exists one nontrivial component in $G-S$ or $G=K_s \vee \overline{K_{n-s}}$ for $i=n-s$ if there is no nontrivial component in $G-S$.
		When $G=K_s \vee (K_{n-s-i}+\overline{K_i})$, if $i>s$, we can join an isolated vertex in $G-S$ to all vertices in $V(K_{n-s-i})$. The resulting graph still satisfies (\ref{3}) and has larger spectral radius, a contradiction. Hence $i=s$, and $G=K_s \vee (K_{n-2s}+\overline{K_s})$. Since $\delta(G)\geq \delta$ and $K_{n-2s}$ is a nontrivial component, $\delta \leq s \leq \frac{n-2}{2}$.
		When $G=K_s \vee \overline{K_{n-s}}$ for $i=n-s$, since $G$ has the largest spectral radius, $ G = K_{ \frac{n}{2} } \vee \overline{K_{ \frac{n}{2} }}$.
		
		By Lemma \ref{lem9}, if $G$ is not $\text{GBC}_k$, either $G=K_\delta \vee (K_{n-2\delta}+\overline{K_\delta})$ or $\rho(G)< \rho(K_\delta \vee (K_{n-2\delta}+\overline{K_\delta}))$.
		It leads to a contradiction with the conditions of Theorem \ref{thm6}. This completes the proof.
	\end{proof}

	\section*{Declaration of competing interest}
	
	The authors declare that they have no known competing financial interests or personal relationships that could have appeared to influence the work reported in this paper.
	
	\section*{ Data availability}
	
	No data was used for the research described in the article.

\end{document}